\renewcommand\labelenumi{(\roman{enumi})}
\renewcommand\theenumi\labelenumi
\newcommand{\eps}{\ensuremath{\varepsilon}}
\newcommand{\ind}{\mathbbm{1}}
\newcommand{\F}{\mathbbm{F}}
\newcommand{\E}{\mathbbm{E}}
\newcommand{\R}{\mathbbm{R}}
\newcommand{\Tr}{\mathrm{Tr}}
\renewcommand{\Pr}{\mathbbm{P}}
\newcommand{\Fqn}{\F_q^n}
\newcommand{\bx}{\mathbf{x} }
\newcommand{\bxi}[1]{\mathbf{x}^{(#1)}}
\newcommand{\sm}{\setminus}
\newcommand{\se}{\subseteq}
\newcommand{\xv}{\mathbf{x} }
\newcommand{\compl}[1]{{#1}^{\mathsf{c}}} 
\newcommand{\solA}[2]{\sol(#1;#2)}
\title{\vspace{-0.8cm}Towards a characterisation of  Sidorenko systems\thanks{Research supported by ARC Discovery Project DP180103684.}}
\author{Nina Kam\v{c}ev\thanks{Department of Mathematics, Faculty of Science, University of Zagreb, Croatia, {\tt nina.kamcev@math.hr}. Supported by European Union’s Horizon 2020 research and innovation programme (MSCA GA No 101038085).}
\and Anita Liebenau\thanks{School of Mathematics and Statistics, UNSW Sydney, NSW 2052, Australia, {\tt a.liebenau@unsw.edu.au}.}
\and Natasha Morrison\thanks{Mathematics and Statistics, University of Victoria, Victoria, B.C. Canada V8P 5C2, {\tt nmorrison@uvic.ca}. Supported by Natural Sciences and Engineering Research Council of Canada (NSERC Discovery Grant RGPIN-2021-02511).}
}
\newtheoremstyle{case}{}{}{\normalfont}{}{\itshape}{:}{ }{}
\newtheorem{thm}{Theorem}
\newtheorem{lem}[thm]{Lemma}
\newtheorem{conj}[thm]{Conjecture}
\newtheorem{claim}[thm]{Claim}
\newtheorem{ques}[thm]{Question}
\theoremstyle{definition}
\newtheorem{defn}[thm]{Definition}
\newtheorem{obs}[thm]{Observation}
\newtheorem{rem}[thm]{Remark}
\newtheorem{example}[thm]{Example}
\newtheoremstyle{case}{}{}{\normalfont}{}{\itshape}{\normalfont:}{ }{}
\theoremstyle{case}
\numberwithin{equation}{section}
\numberwithin{thm}{section}
\newcommand\given[1][]{\:#1  \vert  \:}
\newcommand{\sol}{\mathrm{sol}}
\begin{document}

\maketitle

\begin{abstract}
A system of linear forms $L=\{L_1,\ldots,L_m\}$ over $\F_q$ is said to be \emph{Sidorenko} if the number of solutions to $L=0$ in any $A \subseteq \F_{q}^n$ is asymptotically as $n\to\infty$ at least the expected number of solutions in a random set of the same density. Work of Saad and Wolf~\cite{sw17} and of Fox, Pham and Zhao~\cite{fpz19} fully characterises single equations with this property and both sets of authors ask about a characterisation of Sidorenko \emph{systems} of equations. 

In this paper, we make progress towards this goal. Firstly, we find a simple necessary condition for a system to be Sidorenko, thus providing a rich family of non-Sidorenko systems. In the opposite direction, we find a large family of structured Sidorenko systems, by utilizing the entropy method. We also make significant progress towards a full classification of systems of two equations. 
\end{abstract}

\section{Introduction}

A bipartite graph $H$ is called {\em Sidorenko} if the number of copies of $H$ in any graph $G$ is asymptotically at least the expected number of copies in a random graph with the same edge density as $G.$
Inspired by work resolving earlier conjectures of Erd\H{o}s~\cite{erdos62} from the 1960s and Burr and Rosta~\cite{br80} from the 1980s concerning the closely related property of commonness (see also~\cite{jst96,sid93,thomason89,thomason97}),  Sidorenko~\cite{sid93} conjectured that every bipartite graph is Sidorenko (an equivalent conjecture was earlier made by Erd\H{o}s and Simonovits~\cite{sim84}). It  has since become one of the major open problems in extremal combinatorics. Progress towards its resolution continues to this day. Sidorenko~\cite{sid93} proved the statement for complete bipartite graphs, trees and even cycles, but since then it has been verified for many other families of graphs (see~ \cite{sid1,sid2, sid3,sid4,sid5,sid6}).

The focus of this paper concerns the analogous questions for systems of equations. The study of these properties was initiated by Saad and Wolf~\cite{sw17}, whose motivation stemmed from existing earlier results for specific systems such as Schur triples and arithmetic progressions (see~\cite{datskovsky03,grr96,rz98,ss16,schoen99,w10}) as well as the extensive research in the graph setting. Let $q$ be a prime power and let $L=L(\xv)$ be a system of linear forms with coefficients in $\F_q.$ Say that $L$ is \emph{Sidorenko} if, for every subset $A \subseteq \Fqn$, we have that the number of solutions to $L(\bx)=0$ in $A$ is asymptotically, as $n\to\infty,$ at least the expected number of solutions in a random set of the same density. An equivalent, more practical definition is given below (see Definition~\ref{def:sid}). Say that $L$ is \emph{common} if, for every two-colouring of $\Fqn$, the number of monochromatic solutions is asymptotically, as $n\to\infty,$ at least the expected number of monochromatic solutions in a random colouring of $\Fqn.$  Clearly if a system is Sidorenko, then it is common.

When $L$ consists of a single linear form $a_1x_1 + \dots + a_kx_k$ with coefficients $a_i \in \F_q^\times = \F_q \setminus \{ 0\}$, these properties are well understood. Saad and Wolf~\cite{sw17} proved that such an $L$ is Sidorenko (and therefore common), whenever the coefficients can be partitioned into pairs, each summing to zero. They conjectured that this sufficient condition is also necessary, even for commonness, which was confirmed by Fox, Pham and Zhao~\cite{fpz19} using a novel construction via random Fourier coefficients. Furthermore, they showed that whenever $k$ is odd, $L$ is non-Sidorenko. An earlier cancellation argument due to Cameron, Cilleruelo and Serra~\cite{ccs07} shows that such $L$ is common whenever $k$ is odd. Hence, single homogeneous equations are fully characterised. Systems of multiple equations, however, appear to be more elusive.

Before stating our results, let us introduce some necessary notation. Let $L$ be a linear system consisting of $m$ linear forms $L_1,\ldots,L_m$ in $k$ variables with coefficients in $\F_q.$ For an $\ell$-variable system $L',$ we say that {\em $L$ induces $L'$ as a subsystem} if there exists a subset $\{i_1,\ldots,i_{\ell}\} \se[k]$ such that $L(x_1, \dots, x_k)=0$ implies that $L'(x_{i_1},\ldots, x_{i_{\ell}})=0.$ If $L'$ is a single linear form, we also say $L$ {\em induces the equation $L'=0$.} The \emph{length} of a linear equation $E$ is the number of variables in $E$ with \emph{non-zero} coefficients. Given a system $L$, let $s(L)$ denote the minimal length of an equation induced by $L$. 

A simple necessary condition for a graph $H$ to be Sidorenko is that it has no odd cycle, since a complete bipartite graph does not contain a copy of $H.$ Similarly, a simple necessary condition for a system to be Sidorenko is that the coefficients of each linear form sum to zero (we call such a system \emph{translation invariant}). Here, there is also a simple certificate, as the set $\{x \in \Fqn: x_1 = 1\}$ contains only solutions to translation-invariant systems. Our first result, proved in Section~\ref{sec:non-sid}, provides a non-trivial necessary condition for a system to be Sidorenko. Surprisingly, it tells us that if the shortest induced equation is of odd length, then the whole system is not Sidorenko. This generalises the fact that single equations with an odd number of variables are not Sidorenko~\cite{fpz19}. 

\begin{thm}\thlabel{thm:main1-odd}
Let $L$ be a system such that $s(L)$ is odd. Then $L$ is not Sidorenko. 
\end{thm}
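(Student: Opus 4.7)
The plan is to adapt the Fox--Pham--Zhao argument for single odd-length equations to the full system $L$. We will construct a random $A \subseteq \F_q^n$ whose Fourier transform is supported on a single line $\F_q y_0$ and takes a small negative value on $\F_q^\times y_0$. This reduces the Fourier expansion of $t(L,A)$ to a weighted enumerator over the $\F_q$-row span $W_L^\perp \subseteq \F_q^k$, whose minimum nonzero Hamming weight is $s(L) = \ell$; the odd parity of $\ell$ will then produce a dominant negative term.

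WLOG $L$ has full row rank $m$ (otherwise pass to a maximal linearly independent subsystem, which preserves $V_L$ and $W_L^\perp$). Fix $\alpha \in (0,1)$ and a small $\eps > 0$. For large $n$ and a fixed $y_0 \in \F_q^n \setminus \{0\}$, set
\[ p(x) := \alpha + \eps\bigl(1 - q\,\ind[y_0 \cdot x = 0]\bigr). \]
Then $\E p = \alpha$, $p \in [0,1]$ when $\eps \le \min(\alpha/(q-1), 1-\alpha)$, and a direct computation shows that $\hat p$ is supported on $\F_q y_0$ with $\hat p(0) = \alpha$ and $\hat p(cy_0) = -\eps$ for $c \in \F_q^\times$. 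Sample $A \subseteq \F_q^n$ by including each $x$ independently with probability $p(x)$. Plancherel on $V_L \subseteq (\F_q^n)^k$---using that $V_L^\perp$ consists of tuples of the form $\bigl(\sum_j a_{ji} z_j\bigr)_i$ with $z \in (\F_q^n)^m$ (where $a_{ji}$ are the coefficients of $L_j$), together with the support of $\hat p$ and the full rank of $L$---forces the admissible frequencies to be precisely $(c_1 y_0, \dots, c_k y_0)$ with $c \in W_L^\perp$. The main contribution to $\E\, t(L,A)/q^{n(k-m)}$ thus becomes
\[ \sum_{c \in W_L^\perp} \alpha^{k-\mathrm{len}(c)}(-\eps)^{\mathrm{len}(c)} \;=\; \alpha^k + \sum_{j \ge \ell} N_j\, \alpha^{k-j}(-\eps)^j, \]
where $N_j = |\{c \in W_L^\perp : \mathrm{len}(c) = j\}|$. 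Since $\ell$ is odd and $N_\ell \ge 1$, the dominant term is $-N_\ell\,\alpha^{k-\ell}\eps^\ell < 0$; the remaining terms are $O(\eps^{\ell+1})$, and contributions from tuples with coincident coordinates are smaller by a further factor of $q^{-n}$.

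Hence $\E\, t(L,A) \le (\alpha^k - \delta)\, q^{n(k-m)}$ for some $\delta > 0$ depending only on $L$, $\alpha$, $q$, $\eps$, once $\eps$ is small and $n$ is large. Since flipping a single $\ind_A(z)$ alters $t(L,A)$ by at most $k\,q^{n(k-m-1)}$ and $|A|$ by at most $1$, McDiarmid's inequality gives $e^{-\Omega(q^n)}$ concentration for both $t(L,A)$ and $|A|/q^n$. Consequently a deterministic $A \subseteq \F_q^n$ exists with $|A|/q^n \to \alpha$ and $t(L,A) \le (\alpha^k - \delta/2)\, q^{n(k-m)}$, refuting the Sidorenko property. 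The main conceptual step is the design of $p$ so that $\hat p$ is supported on a line; after this the argument becomes a weight-enumerator calculation on the code $W_L^\perp$, where the minimality of $\ell$ blocks cancellation by shorter codewords and its odd parity determines the sign of the leading term.
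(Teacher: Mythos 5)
Your argument is correct in substance but takes a genuinely different route from the paper. The paper works with the explicit set $A=\F_q^n\setminus\{0\}$ and computes $\Lambda_L(A)$ exactly by inclusion--exclusion over the sets of coordinates forced to be zero; the combinatorial heart is a lemma about ``good'' (rank-reducing) subsets of $[k]$ and a sign analysis of the alternating sums $\sum_{B'\subseteq B,\,|B'|\ge s(L)}(-1)^{|B'|}$ over maximal good sets, which is where the odd parity of $s(L)$ enters. You instead push the Fox--Pham--Zhao Fourier philosophy to systems: by forcing $\widehat{p}$ onto the single line $\F_q y_0$ you collapse the frequency sum $\Lambda_L(f)=\sum_{z\in(\F_q^n)^m}\prod_i\widehat f\bigl(\sum_j a_{ji}z_j\bigr)$ (full rank of $L$ does indeed force every admissible $z_j$ into $\F_q y_0$) to the weight enumerator of the row span $W_L^\perp\subseteq\F_q^k$, whose minimum Hamming weight is exactly $s(L)=\ell$; then $N_\ell\ge 1$, the free parameter $\eps$, and the odd parity finish the computation. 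Your route buys a much cleaner combinatorial core (no good-sets lemma, no delicate cancellation of binomial coefficients --- the minimality of $\ell$ blocks all interference) and extra degrees of freedom in $\alpha$ and $\eps$, at the cost of a randomised construction and the attendant transference steps (coincident coordinates, McDiarmid); the paper's construction is explicit and its count exact. It is worth noting that the paper's set corresponds to $\widehat f(r)=-q^{-n}$ for \emph{all} $r\ne 0$, i.e.\ Fourier mass spread over all of $\widehat{\F_q^n}$ rather than a line, which is precisely why their argument needs the heavier support-pattern combinatorics that your line restriction avoids. Two small points to patch: your coincidence estimate ``smaller by a factor of $q^{-n}$'' and the bounded-difference constant $k\,q^{n(k-m-1)}$ both require that $L$ be non-degenerate and that no coordinate be identically zero on the solution space; this is automatic when $s(L)\ge 3$, and the only remaining odd case $s(L)=1$ (which subsumes $m=k$) is trivial, since $A=\F_q^n\setminus\{0\}$ then contains no solutions at all.
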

As single-equation systems of odd length are common, we cannot hope that the condition that $s(L)$ is odd suffices for uncommonness. In~\cite{klm21-uncommon}, we prove the following related statement that concerns the case when $s(L)$ is even.

\begin{thm}[Theorem 1.3 in~\cite{klm21-uncommon}] \thlabel{t:uncommon}
Let $L$ be a system such that $s(L)$ is even. If all equations of length $s(L)$ induced by $L$ are uncommon, then $L$ is uncommon. 
\end{thm}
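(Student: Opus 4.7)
My plan is to analyse the excess of monochromatic solutions to $L$ over the random prediction via a perturbative expansion, show that its leading order is governed by the length-$s(L)$ induced equations, and exploit their uncommonness to produce a witnessing colouring. Set $s := s(L)$ (even by hypothesis) and let $k$ denote the number of variables of $L$. For a fractional colouring $f = \tfrac12 + \eps g$ with $g : \F_q^n \to [-1,1]$ of mean zero and $\eps>0$ small (chosen depending on $L$ only), define
\[
M_L(f) := \sum_{\bx \in \sol(L)} \bigl[\prod_{i} f(x_i) + \prod_{i}(1 - f(x_i)) \bigr],
\qquad
\Lambda_L(S, g) := \sum_{\bx \in \sol(L)} \prod_{i \in S} g(x_i).
\]
A binomial expansion yields $M_L(f) - M_L(\tfrac12) = 2^{1-k} \sum_{|S|\text{ even},\, |S|\geq 2} (2\eps)^{|S|} \Lambda_L(S,g)$. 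Finding a mean-zero $g$ making this expression strictly negative and then rounding $f$ via independent Bernoulli sampling produces a two-colouring witnessing uncommonness of $L$.

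The minimality of $s(L)$ forces the low-order terms to vanish. For every $S$ with $1 \leq |S| < s$, the system $L$ induces no non-trivial equation on $\{x_i : i \in S\}$, hence the projection $\pi_S : \sol(L) \to \F_q^{n|S|}$ is surjective with constant fibre size; so $\Lambda_L(S, g) = |\mathrm{fibre}| \cdot \prod_{i \in S}\bigl(\sum_{x} g(x)\bigr) = 0$ by the mean-zero hypothesis on $g$. For $|S|=s$, the image $V_S := \pi_S(\sol(L)) \se \F_q^{n|S|}$ is the affine subspace cut out by the length-$s$ induced equations on the variables of $S$, and one has $\Lambda_L(S,g) = c_S \Lambda_{V_S}(g)$ for an explicit $c_S > 0$; in the simplest case where $V_S$ is cut out by a single induced equation $E_S$, the factor $\Lambda_{V_S}(g) = \Lambda_{E_S}(g)$ is precisely the quantity whose sign governs uncommonness of $E_S$.

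The crucial step is to exhibit a mean-zero, $1$-bounded random function $g$ such that $\E \Lambda_{V_S}(g) \leq 0$ for every $S$ with $|S|=s$, with strict inequality for at least one such $S$. Since each length-$s$ induced equation is uncommon by hypothesis (equivalently, by the Fox--Pham--Zhao characterisation, has non-paired coefficient vector), I plan to adapt the Fox--Pham--Zhao random-Fourier construction to produce a single distribution on $g$ with $\E \Lambda_E(g) < 0$ for \emph{every} uncommon even-length form $E$ arising in this way. Granted such a $g$, the leading-order term $2^{1-k}(2\eps)^s \sum_{|S|=s} c_S \E \Lambda_{V_S}(g)$ is strictly negative, while the tail sum over $|S| \geq s+2$ is bounded by a constant times $(2\eps)^{s+2}|\sol(L)|$ thanks to the trivial estimate $|\Lambda_L(S,g)| \leq |\sol(L)|$. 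Taking $\eps$ small (independently of $n$) then gives $\E[M_L(f) - M_L(\tfrac12)] < 0$, and some realisation of $g$ witnesses uncommonness of $L$.

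The principal obstacle is the universality of the random witness: the Fox--Pham--Zhao construction is tailored to a single equation, so one must verify or adapt it so that a single distribution on $g$ yields $\E\Lambda_E(g) < 0$ simultaneously for all uncommon even-length $E$ arising as length-$s$ induced equations. A related technical point arises when some $V_S$ is cut out by several induced length-$s$ equations, in which case $\Lambda_{V_S}(g)$ admits a Fourier decomposition indexed by non-trivial linear combinations of those equations (each itself an induced length-$s$ and hence, by hypothesis, uncommon equation via minimality of $s(L)$), and one has to argue that negativity survives the decomposition. Should a fully universal construction prove elusive, a back-up strategy is to combine per-equation witnesses $g_E$ into a random $g = \sum_E \alpha_E g_E$ via independent random signs $\alpha_E$, exploiting that odd cross terms vanish in expectation while the diagonal contributions survive.
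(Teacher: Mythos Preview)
This theorem is quoted from \cite{klm21-uncommon} and is not proved in the present paper; the only comment here is that it follows from Theorem~3.1 of that reference. So there is no in-paper proof to compare against, and your outline should be judged on its own. It is essentially correct, and indeed mirrors the arguments the paper does carry out in special cases (see the proofs of \thref{cor:sidfun}(ii) and \thref{p:sid-one}). Two remarks on the obstacles you flag.

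First, the worry that $V_S$ might be cut out by several equations does not arise. If $|S|=s=s(L)$ and $S$ were $t$-rank-reducing with $t\ge 2$, then two independent forms supported on $S$ could be combined to eliminate any chosen coordinate, producing an induced equation of length at most $s-1$, contradicting the minimality of $s(L)$. Hence each good $S$ of size $s$ carries a unique (up to scaling) equation $E_S$, and your leading term is exactly $\sum_S c_S\,\tau_{E_S}(g)$ with $c_S>0$.

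Second, the universality of the witness is genuine but resolves cleanly. Work over $\F_q$ and set $\widehat g(0)=0$, $\widehat g(r)=c\,\xi_r$ for $r\neq 0$, where the $\xi_r$ are independent uniform on the unit circle subject to $\xi_{-r}=\overline{\xi_r}$ (so $g$ is real) and $c>0$ is small enough that $|g|\le 1$. For any even-length form $E$ whose coefficients do not partition into $\pm$-pairs, one has $\E\bigl[\prod_i \widehat g(a_i r)\bigr]=0$ for each $r\neq 0$, so $\E\bigl[\sum_S \tau_{E_S}(g)\bigr]=0$. The realisation $\xi_r\equiv 1$ makes every term strictly positive, hence some realisation makes the sum strictly negative; this fixes a single $g$ working for all $E_S$ simultaneously. (In characteristic $2$ replace the phases by random signs; uncommonness then forces some coefficient to have odd multiplicity, and the same computation applies.) Lifting $g$ to $\F_q^n$ by precomposition with a surjective linear form leaves each $\tau_{E_S}$ unchanged, so your $\eps$ is indeed independent of $n$, and the rounding step succeeds for large $n$ because the hypothesis forces $L$ to be non-degenerate (an induced equation $x_i=x_j$ would be common).

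Your back-up strategy of superposing per-equation witnesses would run into trouble because the $E_S$ share variables and the cross terms do not vanish; fortunately it is not needed.
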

Since the property of being uncommon implies the non-Sidorenko property, this can be seen as an analogue of \thref{thm:main1-odd} for $s(L)$ even. Theorem~\ref{t:uncommon} follows from  Theorem 3.1 in~\cite{klm21-uncommon}, which provides a (rather general) sufficient condition for as system $L$ with $s(L)$ even to be uncommon.

In Section~\ref{sec:entropy}, we turn our attention to results in the positive direction. Say that a system $L=L(x_1,\ldots,x_k)$ defined by the linear forms $L_1,\ldots,L_m$  {\em admits a graph template} if there is a partition of the variables $x_1,\ldots,x_k$ into tuples $\bxi{1},\ldots,\bxi{r},$ for some $r$, such that every $L_{i}$ is of the form $L_{i}'( \bxi{u}) - L_{i}'( \bxi{v})$ for some $u\neq v.$ The graph $G$  on vertex set $[r]$ and with an edge $uv$ for every $L_i$ of the form $L_{i}'( \bxi{u}) - L_{i}'( \bxi{v})$ is called {\em a template graph for $L$.}

\begin{thm}\thlabel{thm:main2-entropy}
If $L$ is a system of linear forms that admits a graph template which is a tree, allowing parallel edges, $L$ is Sidorenko. 
\end{thm}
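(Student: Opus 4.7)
The plan is to use the entropy method, adapting the classical proof that trees are Sidorenko in the graph setting. Connectedness of the template tree $G$ forces every block $\bxi{u}$ to have the same length $t$, so $k = rt$ where $r = |V(G)|$. For each tree edge $\{u,v\}$, let $\pi_{uv}\colon (\F_q^n)^t \to (\F_q^n)^{p_{uv}}$ collect the $p_{uv}$ linear forms $L_e'$ on the parallel edges between $u$ and $v$, and let $r_{uv}$ denote the $\F_q$-rank of these forms; a standard leaf-induction gives $\mathrm{rank}(L) = \sum_{\{u,v\}} r_{uv}$, so the Sidorenko bound reduces to showing that the number of solutions to $L = 0$ in $A^k$ is at least $|A|^k / q^{n\,\mathrm{rank}(L)}$.

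I would construct a probability distribution $\mu$ on this solution set by BFS sampling from an arbitrary root $\rho \in V(G)$. First sample $\bXi{\rho}$ uniformly in $A^t$; then, for each non-root vertex $v$ with parent $u$, sample $\bXi{v}$ uniformly from
\[
S_v(\bXi{u}) := \{\mathbf{y} \in A^t \colon \pi_{uv}(\mathbf{y}) = \pi_{uv}(\bXi{u})\},
\]
which contains $\bXi{u}$ and so is non-empty.

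The crucial observation is that this sampling preserves uniform marginals on $A^t$ at every block. Assuming inductively that $\bXi{u}$ is uniform on $A^t$, for every $\mathbf{y} \in A^t$ one computes
\[
\Pr[\bXi{v} = \mathbf{y}] = \sum_{\mathbf{x} \in A^t} \frac{1}{|A|^t}\cdot\frac{\ind[\pi_{uv}(\mathbf{x}) = \pi_{uv}(\mathbf{y})]}{|S_v(\mathbf{x})|} = \frac{1}{|A|^t},
\]
because $|S_v(\mathbf{x})|$ depends only on $\pi_{uv}(\mathbf{x})$ and the inner sum has exactly $|S_v(\mathbf{y})|$ nonzero terms. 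Inducting along BFS, every $\bXi{u}$ is uniform on $A^t$.

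The entropy chain rule together with the Markov structure of the sampling then gives $H(\mu) = t\log|A| + \sum_{v \neq \rho} H(\bXi{v}\mid\bXi{u_v})$, where each conditional entropy equals $\frac{1}{|A|^t}\sum_{\mathbf{c}} N_{uv}(\mathbf{c}) \log N_{uv}(\mathbf{c})$ with $N_{uv}(\mathbf{c}) = |\pi_{uv}^{-1}(\mathbf{c}) \cap A^t|$. Since $\sum_\mathbf{c} N_{uv}(\mathbf{c}) = |A|^t$ and $|\mathrm{supp}\,N_{uv}| \leq q^{n r_{uv}}$ (as the support lies in the image of $\pi_{uv}$), Jensen's inequality for $x\log x$ gives each term at least $t\log|A| - n r_{uv}\log q$. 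Summing over the $r-1$ tree edges yields $H(\mu) \geq k\log|A| - n\,\mathrm{rank}(L)\log q$, and since $H(\mu)$ is at most $\log$ of the number of solutions in $A^k$, the Sidorenko bound follows. The main obstacle is the marginal-preservation identity; it exploits the swap-symmetry of each equation $L_e'(\bxi{u}) - L_e'(\bxi{v}) = 0$, without which the standard tree-entropy argument would collapse at vertices of depth at least $2$, where marginals need not be uniform.
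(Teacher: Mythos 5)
Your proof is correct and is essentially the paper's argument: your BFS-sampled, tree-indexed Markov chain is exactly the distribution the paper constructs by induction on leaves (the paper's ``block'' property is your uniform-marginal-preservation identity, and your Jensen step for $\sum_{\mathbf{c}} N_{uv}(\mathbf{c})\log N_{uv}(\mathbf{c})$ is equivalent to the paper's computation $H(X_B \given L(X_B)) = H(X_B) - H(L(X_B)) \ge \log|A|^{t} - n r_{uv}\log q$). The only differences are presentational: you unroll the induction into one explicit global construction and check $\rk(L)=\sum_{uv} r_{uv}$ directly, whereas the paper packages the same steps as \thref{thm:ent2} (base case) and \thref{thm:entropy-tree} (adding a leaf to a block).
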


\begin{rem}
    It can easily be seen that if $L$ and $L'$ are systems that are both Sidorenko then their union on disjoint sets of variables is also Sidorenko, as solutions in subsets multiply. So in fact, `tree' can be replaced by `forest' in the theorem.
\end{rem}

To prove \thref{thm:main2-entropy} we adapt the entropy method to this setting. The entropy method was first used in the context of Sidorenko's conjecture for graphs in~\cite{sid6}, and has since been extensively applied in this setting, see~\cite{sid2,sid3,sid5}. We also recommend~\cite{gowers15web,tao17web} for excellent explanations of the underlying idea of the method. See also~\cite{ent-aps} for many other applications of the entropy method to combinatorics. 

In light of  \thref{thm:main1-odd,thm:main2-entropy} and our solid understanding of systems containing a single linear form, it is natural to wonder how close we are to a complete characterisation of Sidorenko systems in general. In Section~\ref{sec:2xk}, we develop  tools for analysing systems of rank two, and utilise them to make headway towards such a characterisation. Understanding such systems fully can be invaluable in trying to understand larger more complex systems, see Theorem 3.1 and Remark 3.2 in~\cite{klm21-uncommon}. \thref{cor:sidfun} provides sufficient conditions both to ensure that a two-equation system is common and to certify that it is uncommon. We apply this tool to give some examples of common systems with particular properties (see \thref{ex:non-AQ,ex:cont-AQ}) and, in \thref{p:sid-one}, to give a large family of non-Sidorenko two-equation systems that are not covered by \thref{thm:main1-odd}. These results illustrate some of the complexities involved in determining the properties of systems of multiple equations. In particular, we see that the multiplicative structure of the coefficients appears to play a key role. This is in stark contrast to the single equation case, where properties are determined solely by the additive structure of coefficients.

\subsection{Notation and definitions}\label{sec:pr}

Let $q$ be a prime power and let $L$ be a $k$-variable system defined by the linear forms $L_1,\ldots,L_m$ with coefficients in $\F_q.$ We identify $L$ with the $(m\times k)$-matrix  consisting of the coefficients of $L_1,\ldots, L_m.$ We call $L$ an $(m\times k)$-system if $m\le k$ and the coefficient matrix has full rank. Throughout, we work interchangeably with systems and their corresponding matrices. 

For a set $A \se \Fqn$ define the solution set of $L$ in $A$ to be 
$$\solA{L}{A} = \{\bx = (x_1, \dots, x_k) \in A^{k}: L \bx^T = 0 \},$$ 
and the {\em density of solutions} to $L$ in  $A$ to be 
\begin{equation}\label{eq:lamf}
\Lambda_{L} (A) = \frac{|\solA{L}{A}|}{|\solA{L}{\Fqn}|}.
\end{equation}
The following observation, though immediate, is useful throughout the paper.
\begin{obs}\label{ob:n-vec} 
	Let $L$ be an $(m\times k)$-system. 
	Then $|\solA{L}{\Fqn}| = |\solA{L}{\F_q}|^n = q^{n(k-m)}$. 
\end{obs}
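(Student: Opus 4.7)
The plan is to exploit the fact that $L$ has coefficients in $\F_q$ (the scalar field), not in $\F_q^n$, so that the system $L\bx^T = 0$ imposes conditions coordinate-by-coordinate on the vectors $x_1,\dots,x_k \in \F_q^n$. This will give both equalities almost for free.

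First, I would write each variable $x_i \in \F_q^n$ in coordinates as $x_i = (x_i^{(1)}, \dots, x_i^{(n)})$ with $x_i^{(j)} \in \F_q$. Since the coefficients of each linear form $L_\ell$ lie in $\F_q$, the vector equation $L_\ell(x_1,\dots,x_k) = 0 \in \F_q^n$ is equivalent to the $n$ scalar equations $L_\ell(x_1^{(j)}, \dots, x_k^{(j)}) = 0$ for $j=1,\dots,n$. Thus the map $(x_1,\dots,x_k) \mapsto \bigl((x_1^{(j)},\dots,x_k^{(j)})\bigr)_{j=1}^n$ provides a bijection between $\sol(L; \F_q^n)$ and $\sol(L;\F_q)^n$, yielding $|\sol(L;\F_q^n)| = |\sol(L;\F_q)|^n$.

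Next, to count $|\sol(L;\F_q)|$, I would invoke standard linear algebra: viewing $L$ as a linear map $\F_q^k \to \F_q^m$ via $\bx \mapsto L\bx^T$, the assumption that $L$ is an $(m\times k)$-system with full rank $m$ implies by the rank-nullity theorem that $\ker(L)$ has dimension $k-m$ over $\F_q$, so $|\sol(L;\F_q)| = q^{k-m}$. Combining the two equalities gives $|\sol(L;\F_q^n)| = q^{n(k-m)}$. There is no real obstacle here — the only point to be careful about is verifying that the coordinate-wise decoupling is literally a bijection, which is immediate from the $\F_q$-linearity of the forms.
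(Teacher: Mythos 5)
Your proof is correct and is exactly the standard argument the paper has in mind when it calls this observation ``immediate'' (the paper gives no explicit proof): coordinate-wise decoupling over $\F_q$ followed by rank--nullity. Nothing is missing.
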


Following the terminology from \cite{rr97}, we call systems that induce an equation $x_i=x_j,$ for some $i\neq j,$ {\em redundant systems} (and otherwise we call them {\em irredundant}). It is not difficult to verify (see~\cite{fpz19}) that the following definitions are equivalent to those given in the introduction for irredundant systems. 

\begin{defn}\thlabel{def:sid}
Let $L$ be an irredundant $(m \times k)$-system. Say that $L$ is \emph{Sidorenko} if for all $n$ and all $A \se \Fqn$, we have 
$$|\solA{L}{A}| \ge \frac{|A|^k}{q^{nm}},$$
or equivalently 
$$\Lambda_{L} (A) \ge \left(\frac{|A|}{q^n}\right)^k.$$
\end{defn}

\begin{rem}
Let $L(x_1,\ldots,x_k)$ be a system of linear forms and let $L'(x_1,\ldots,x_{k+1})$ be the system obtained from $L$ by including the form $x_k-x_{k+1}.$ It is easy to see that $L$ is Sidorenko if and only if $L'$ is. Thus it suffices to characterise irredundant systems. We would also like to point out that the benchmark for irredundant systems is no longer $\left(|A|/q^n\right)^k$, so \thref{def:sid} really only applies to the irredundant case.
\end{rem}

\section{Proof of Theorem~\ref{thm:main1-odd}}\label{sec:non-sid}

In this section we will show that any system $L$ with $s(L)$ odd is not Sidorenko. We first introduce some terminology that will help in the proof.

Recall that $s(L)$ denotes the minimal length of an equation induced by $L.$ Observe that if $L'$ is an equation induced by $L,$ where the subset $\{i_1,\ldots,i_{\ell}\}\se[k]$ corresponds to solutions to $L'=0,$ then the coefficient matrix of $L$ is equivalent to a matrix $M$ which, in some row,  is zero for every $j\in [k]\setminus \{i_1,\ldots,i_{\ell}\}.$ 
Given an $(m \times k)$-system $L$, say that a set $B \se [k]$ is $t$-\emph{rank-reducing} for $L$ if the matrix obtained from $L$ by deleting the columns indexed by $B$ has rank $m-t$. If a set is $t$-rank-reducing for some $t \ge 1$, say that it is \emph{rank-reducing}. Note that if $|B|<s(L)$, then $B$ cannot be rank-reducing. Call a set $B \se [k]$  \emph{good} if, for some $t \ge 0$, it is $(t+1)$-rank-reducing and $|B| = s(L) + t$.  
Let us make a couple of observations that follow from elementary linear algebra. Consider $B \subseteq [k]$ with $|B| = s(L) + t$ and $t \geq 0$. Then $B$ is $r$-rank-reducing, for some $0 \le r \le t+1$. The set $B$ is good if and only if there is a matrix equivalent to $L$ containing exactly $t+1$ rows supported on $B$ (i.e.~it is zero for all $j\in[k]\sm B$ in those $t+1$ rows). In particular, a set $B$ is good if any subset $B' \subseteq B$ such that $|B'| = s(L)-1$ has the property that the collection of variables $\{x_j: j \in B\setminus B'\}$ is uniquely determined by the variables $\{x_i: i \in B'\}$. This implies that if $s(L)-1$ variables in $B$ are 0, then so are all the variables in $B$. In addition, note that if a variable $x_j$ is determined by $\{x_i: i \in B\}$, then $B \cup \{ j\}$ is also good. 

We will utilise the following simple facts about good sets.

\begin{lem}\label{lem:good-sets} Let $B$ be a good set. 
\begin{enumerate}
    \item\label{it:subset-good} Every subset $B' \subseteq B$ with $|B'| \ge s(L)$ is good.
    \item\label{it:good-unique}There is a unique maximal good set containing $B$.
\end{enumerate}
\end{lem}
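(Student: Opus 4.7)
The plan is to reinterpret goodness through the dimension of a natural subspace and then apply elementary linear algebra. For any $C\subseteq [k]$, let $V_C$ denote the subspace of the row span of the coefficient matrix of $L$ consisting of vectors supported on $C$ (i.e.\ vanishing outside $C$). The standard duality between column deletion and coordinate projection gives that $C$ is $r$-rank-reducing if and only if $\dim V_C = r$, so $C$ is good precisely when $|C|\ge s(L)$ and $\dim V_C = |C|-s(L)+1$. The one non-trivial input, which is essentially a repackaging of the definition of $s(L)$, is the universal upper bound
\[
\dim V_C \le |C|-s(L)+1 \qquad \text{for all }\, C\subseteq [k]\text{ with }|C|\ge s(L).
\]
Indeed, if $\dim V_C = r$, imposing the vanishing of any chosen $r-1$ coordinates in $C$ on this $r$-dimensional space yields a nonzero vector supported on at most $|C|-r+1$ coordinates; by minimality of $s(L)$, this forces $|C|-r+1\ge s(L)$. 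This universal bound is the only step requiring genuine thought; everything else is routine.

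For (i), fix $B$ good with $|B|=s(L)+t$ and take $B'\subseteq B$ with $|B'|=s(L)+t'$. Then $V_{B'}$ is cut out of $V_B$ by the $t-t'$ linear conditions $\{v_j=0 : j\in B\setminus B'\}$, whence $\dim V_{B'}\ge (t+1)-(t-t')=t'+1$. The universal upper bound supplies the matching reverse inequality, so $\dim V_{B'}=t'+1$ and $B'$ is good.

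For (ii), I would show that whenever $B_1,B_2$ are good sets both containing $B$, so is $B_1\cup B_2$; since $[k]$ is finite this implies that the union of all good sets containing $B$ is itself the unique maximum good set containing $B$. Applying (i) to $B_1\supseteq B_1\cap B_2\supseteq B$ shows the intersection $B_1\cap B_2$ is good, and directly from the definitions $V_{B_1}\cap V_{B_2}=V_{B_1\cap B_2}$. Inserting the goodness dimensions into the standard formula $\dim(V_{B_1}+V_{B_2}) = \dim V_{B_1}+\dim V_{B_2}-\dim(V_{B_1}\cap V_{B_2})$ and using $|B_1\cap B_2|+|B_1\cup B_2|=|B_1|+|B_2|$ yields
\[
\dim(V_{B_1}+V_{B_2}) = |B_1\cup B_2|-s(L)+1.
\]
Combining $V_{B_1}+V_{B_2}\subseteq V_{B_1\cup B_2}$ with the universal upper bound gives $\dim V_{B_1\cup B_2} = |B_1\cup B_2|-s(L)+1$, so $B_1\cup B_2$ is good, completing (ii).
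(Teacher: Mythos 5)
Your proof is correct. The dictionary you set up is exactly right: if $V_C$ denotes the row-space vectors of $L$ supported on $C$, then deleting the columns in $C$ projects the row space onto the coordinates outside $C$ with kernel $V_C$, so $C$ is $r$-rank-reducing if and only if $\dim V_C=r$; and your ``universal upper bound'' $\dim V_C\le |C|-s(L)+1$ is precisely the observation the paper states without proof (that a set of size $s(L)+t$ is at most $(t+1)$-rank-reducing) -- your derivation of it from the minimality of $s(L)$, by imposing $\dim V_C-1$ vanishing conditions to produce a short nonzero row-space vector, is valid. For part (i) you and the paper do essentially the same thing: the paper peels off one element of $B$ at a time by row operations, while you cut $V_B$ by all $|B\setminus B'|$ coordinate hyperplanes at once and invoke the upper bound to get equality of dimensions. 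For part (ii) you genuinely diverge. The paper argues by contradiction, showing that two distinct maximal good sets $B_1,B_2$ satisfy $|B_1\cap B_2|\le s(L)-2$ by exhibiting a row supported on $W\cup\{j\}$ with $W\subseteq B_1\cap B_2$, $j\in B_1\setminus B_2$, and tracking the rank drop when the columns of $B_2\cup\{j\}$ are removed. You instead prove the stronger closure statement that the union of two good sets with a common good subset is again good, using $V_{B_1}\cap V_{B_2}=V_{B_1\cap B_2}$, the dimension formula for $V_{B_1}+V_{B_2}\subseteq V_{B_1\cup B_2}$, and the universal upper bound to pin down $\dim V_{B_1\cup B_2}$. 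Your route is more conceptual and exhibits the unique maximal good set explicitly as the union of all good sets containing $B$; the paper's route is more hands-on and yields the extra combinatorial fact (used nowhere else, but mildly informative) that distinct maximal good sets overlap in fewer than $s(L)-1$ coordinates.
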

\begin{proof}
Suppose that $|B| = s(L) + t$ for some $t \ge 0,$ and let $B' = B \setminus \{i\}$ for some $i \in B$. As $B$ is good, it is $(t+1)$-rank-reducing and there is a matrix $L^*$ equivalent to $L$ containing exactly $t+1$ rows supported on $B$. Applying row operations to $L^*$, we can obtain $t$ rows supported on $B'$ and so $B'$ is good. This proves \ref{it:subset-good}.

Now let $B_1$, $B_2$ be two distinct maximal good sets. We claim that $|B_1 \cap B_2| \leq s(L)-2$, so no good set can be contained in both of them. To see that, assume the opposite and let $W\se B_1 \cap B_2$  be a set of size $s(L)-1.$ Let $j \in B_1 \setminus B_2$ and let $B' = W \cup \{j\}.$ Then $B'$ is a subset of $B_1$ of size $s(L)$, so $B'$ is good by \ref{it:subset-good}. By applying row operations to $L$, we may suppose that $L$ contains a row $R$ supported on $B'$. Removing the columns corresponding to $B_2$ decreases the rank of $L$ by $|B_2|-s(L)+1$, since $B_2$ is good, and leaves $R$ with a single non-zero co-ordinate in column $j$. Removing this column decreases the rank by one more. Thus, the set $B_2 \cup \{j\}$ is good, contradicting the maximality of $B_2$.
This proves \ref{it:good-unique}.
\end{proof}

We need one more ingredient for the proof of \thref{thm:main1-odd}. Let $L$ be an $(m \times k)$-system. For $n\ge 1,$ $B \subseteq [k]$ and $A \subseteq \Fqn$, let 
    $$t_B(L, A) = \frac{ | \{\bx \in \solA{L}{\Fqn} : x_i \in A \text{ for } i \in B\} | }{|\sol(L,  \Fqn)|}.$$ 
 In particular, $t_{\emptyset}(L, A) = 1$ and $t_{[k]}(L, A)= \Lambda_{L} (A)$ is the density of solutions in $A$. 
 
A straightforward application of the inclusion-exclusion principle gives
\begin{equation}\label{eq:in-ex}
    \Lambda_{L}(A) = \sum_{B \subseteq [k]} (-1)^{|B|}t_B(L,\compl{A}).
\end{equation}

\begin{proof}[Proof of \thref{thm:main1-odd}]
Let $L$ be a $k$-variable system defined by $m$ linear forms, for some integers $k,m.$ We may assume without loss of generality that the linear forms are linearly independent. Furthermore, we may assume that $m<k$ since otherwise $\solA{L}{\F_q}=\{ \bf{0}\}$ and the set $\F_q\sm \{0\}$ witnesses that $L$ is not Sidorenko. Thus, we identify $L$ with an $(m\times k)$-matrix of full rank which is $m.$ 

Let $n$ be sufficiently large. We will show that the set $A=\F_q^n\setminus \{0\}$ satisfies $$\Lambda_L(A) < (1 - q^{-n})^k =\left(\frac{|A|}{q^n}\right)^k.$$
For $B\se [k]$, let $t_B(0)= t_B(L,\{0\}),$ which is the density of solutions to $L=0$ where variables indexed by $B$ are zero. By Observation~\ref{ob:n-vec}, for any $t$-rank-reducing set $B \subseteq [k]$, we obtain $t_B(0) = q^{-n(|B| - t)}$.
    So in particular, if $|B| < s(L)$, $t_B(0) = q^{-n|B|}$. For a good set $B$ with $|B|=s(L)+b$, $t_B(0)  = q^{-n(s(L)+b-b-1)} = q^{-n(s(L)-1)}.$ Any set $B$ with $|B| > s(L)$ that is not good will give a $O(q^{-n\cdot s(L)})$ term.\footnote{Recall that if $s(L)-1$ of the variables in a good set $B$ are 0, then all the variables in $B$ must be 0; this is not the case if $B$ is not good. }
So from \eqref{eq:in-ex} we obtain,
    \begin{equation}\label{eq:sums}
    \Lambda_{L}(A) = \sum_{\substack{B\subseteq [k]\\ |B| < s(L)}}(-1)^{|B|}q^{-n|B|} + \sum_{\substack{B \text{ good }\\ |B| \ge s(L) }}(-1)^{|B|}q^{-n(s(L) -1)} + O(q^{-n\cdot s(L)}),
    \end{equation}
where the implicit constant in the big-O notation is independent of $n.$ 
We will show that for $n$ sufficiently large, the right hand side is less than $(1 - q^{-n})^k$.
We first note that $$(1 - q^{-n})^k = \sum_{\substack{B\subseteq [k]\\ |B| < s(L)}}(-1)^{|B|}q^{-n|B|} + O(q^{-n s(L)}).$$
Thus, it suffices to show that the second sum in~\eqref{eq:sums} is (sufficiently) negative. This will easily follow from Lemma~\ref{lem:good-sets}. As every good set is contained in a unique maximal good set $B$, and any $B' \subseteq B$ with $|B'| \ge s(L)$ is good, we have
    $$\sum_{\substack{B \text{ good }\\ |B| \ge s(L) }}(-1)^{|B|}= \sum_{\substack{B \text{ good }\\ \text{ maximal }}} \sum_{\substack{B' \subseteq B\\ |B'| \ge s(L)}}(-1)^{|B'|}.$$
    Now for a maximal good set $B$, as $s(L)$ is odd we have
    $$\Sigma(B) := \sum_{\substack{B' \subseteq B\\ |B'| \ge s(L)}}(-1)^{|B'|} = -\binom{|B|}{s(L)}+ \binom{|B|}{s(L) + 1} + \cdots +(-1)^{|B|}.$$
    If $s(L) \ge |B|/2$, then $\binom{|B|}{s(L) + 1} -\binom{|B|}{s(L)} <0 $ and so grouping the sum into pairs in this way, we see that $\Sigma(B)$ is negative. If $s(L)< |B|/2$, a similar argument shows that $\sum_{i=0}^{s(L) -1}(-1)^i \binom{|B|}{i}$ is positive, hence (by the binomial theorem) $\Sigma(B)$ is negative, and so the second sum in \eqref{eq:sums} equals $C q^{-n(s(L)-1)},$ where $C$ is negative, as required to complete the proof.
    \end{proof}

\section{Proof of \thref{thm:main2-entropy}}\label{sec:entropy}

The main tool used in the proof of \thref{thm:main2-entropy} is the entropy method. We begin the section with a few preliminaries. Let $X$ be a discrete random variable taking values in a finite set $S$. For $s \in S$, write $p_X(s)$ to denote $\Pr(X = s)$. The \emph{entropy} of $X$ is defined by 
$$H(X) = -\sum_{x \in S}p_X(s) \log(p_X(s)),$$
where here and throughout the rest of the section, the logarithm will be taken base 2. 
For a random variable $X$ and a function $f$ of $X$, we can construct a copy $Y$ of $X$ that is \emph{conditionally independent of $X$ given $f(X)$} as follows by defining the joint distribution by 
\begin{align*}
\Pr\big(\!(X,Y) = (x,y)\!\big)  
 =\ind_{f(x) = f(y)} \Pr\big(f(X)\!=\!f(x)\!\big)\,\Pr\big(\!X=x \!\given\! f(X)\! =\! f(x)\!\big)\,\Pr\big(\!X = y\!\given\! f(X)\! =\! f(x)\!\big).
\end{align*}
Notice that $Y$ has the same distribution as $X$ and satisfies the condition $f(X) = f(Y)$. 

We now recall some useful properties of entropy that will be used throughout the section. These can be found, for example, in~\cite{tao17web,tv2006}. 

\begin{enumerate}[label={(E\arabic*)}]
    \item \label{it:E1} $H(X) \le \log(|S|)$ and equality holds if and only if $X$ is uniformly distributed over $S$.
    \item If $Y$ is a function of $X$, then $H(X,Y) = H(X)$.
    \item If $Y$ is conditionally independent of $X$ given $f(X)$, then $H(X \given Y,f(X)) = H(X \given f(X))$. 
    \item \label{it:E4} Chain rule: $H(X,Y,Z) = H(Z) + H(Y \given Z) + H(X\given Y,Z)$.
\end{enumerate}

Let $X=(X_1,\ldots,X_k)$ be a random variable taking values in some finite product set $S=S_1\times\ldots\times S_k.$ For a subset $B=\{i_1,\ldots,i_{|B|}\}\se [k],$ we denote by $X_B$ the random variable  
$(X_{i_1},\ldots,X_{i_{|B|}})$ with marginal distribution  
$$\Pr\Big((X_{i_1},\ldots,X_{i_{|B|}}) = (x_{i_1},\ldots,x_{i_{|B|}})\Big) 
	= \sum_{ (x_i)_{i\not\in B}} \Pr\Big((X_{1},\ldots,X_{k}) = (x_1,\ldots,x_k)\Big).$$  

We recall that a linear form $L(x_1,\ldots,x_k) = a_1x_1 + \ldots + a_kx_k$ with $a_i \in \F_q \setminus \{0\}$ is Sidorenko if and only if the coefficients $a_i$ can be partitioned into pairs, each summing to zero, that is, if and only if 
$$L(x_1,\ldots,x_k) = L'(x_{i_1},\ldots,x_{i_{k/2}}) - L'(x_{j_1},\ldots,x_{j_{k/2}}),$$ 
where the two sets $\{i_1,\ldots,i_{k/2}\}$ and $\{j_1,\ldots,j_{k/2}\}$ partition $[k].$ The two subsets $\{{i_1},\ldots,{i_{k/2}}\}$ and $\{{j_1},\ldots,{j_{k/2}}\}$ turn out to play a special role for how we can glue two Sidorenko equations (and more generally systems) together to form a larger Sidorenko system. Here is the key technical definition we use. 

\begin{defn}[Blocks]\thlabel{def:block}
Let $L(x_1,\ldots,x_k)$ be an $(m\times k)$-system and let $B\se [k].$ We call $B$ {\em a block of $L$} if for every $n\ge 1$ and every $A\se\Fqn$ there is a random variable $X=(X_1,\ldots,X_k)$ taking values in $\solA{L}{A}$ such that 
\begin{enumerate}
\item\label{block1} $H(X)\ge \log(|A|^k/q^{nm}),$ and 
\item\label{block2} the marginal distribution of $X_B$ is uniform on $A^{|B|}.$
\end{enumerate}
\end{defn}
Let us note first that condition~\ref{block1} together with~\ref{it:E1} implies that $L$ is Sidorenko, so such a distribution $X$ may not exist for any $B\se [k].$ 

Let $L=L(x_1,\ldots,x_k)$ be an arbitrary $(m\times k)$-system over $\F_q,$ defined by the linear forms $L_i(x_1,\ldots,x_k)$ for $i\in[m].$ Then we denote by $(L,-L)$ the system consisting of forms $L_i(x_1,\ldots,x_k)-L_i(y_1,\ldots,y_k)$ for $i\in[m],$ where $\{x_1,\ldots,x_k\}$ and $\{y_1,\ldots,y_k\}$ are disjoint sets of variables. That is, $(L,-L)$ is a system of $m$ Sidorenko equations such that the cancelling pairs of each form match up. Such a system is Sidorenko; the proof for single-equation systems using the Cauchy-Schwarz Inequality in~\cite{sw17} directly generalises to systems of the form  $(L,-L),$ see~\cite{v21} for the details. We now prove a slightly stronger statement, that will be used as the base case for our inductive argument in \thref{thm:main2-entropy}. In addition, proving this as a standalone result will allow us to demonstrate the use of the entropy method here in a simple setting, before jumping in to all the details of the main proof.

\begin{thm}\thlabel{thm:ent2}
Let $L$ be an irredundant $(m \times k)$-system over $\F_q.$ Then the sets $[k]$ and $[2k]\sm [k]$ are blocks of the system $L^*=(L,-L).$ In particular, $L^*$ is Sidorenko. 
\end{thm}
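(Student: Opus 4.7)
My plan is to construct, for given $n \ge 1$ and $A \subseteq \F_q^n$, a single random variable on $\solA{L^*}{A}$ that witnesses the block property for $B = [k]$ and $B = [2k] \setminus [k]$ simultaneously. Abbreviating $L(x) = (L_1(x), \ldots, L_m(x)) \in \F_q^{nm}$ for $x \in \F_q^{nk}$, the solution set is $\solA{L^*}{A} = \{(x, y) \in A^k \times A^k : L(x) = L(y)\}$.

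The construction is the standard one for Cauchy--Schwarz-type entropy arguments: I would let $X = (X_1, \ldots, X_k)$ be uniform on $A^k$, and then let $Y = (Y_1, \ldots, Y_k)$ be the copy of $X$ that is conditionally independent of $X$ given $L(X)$, exactly as defined just before the theorem. By construction $Y$ has the same marginal as $X$ (uniform on $A^k$) and $L(X) = L(Y)$ almost surely, so $(X, Y) \in \solA{L^*}{A}$. Both the first-$k$ and last-$k$ coordinate marginals are uniform on $A^k$, which verifies condition (ii) of \thref{def:block} for both blocks at once.

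The main computation is to lower-bound $H(X, Y)$. Using the chain rule (E4), the fact that $L(X)$ is a function of $X$ (E2), and conditional independence (E3),
\[ H(X, Y) = H(X) + H(Y \given X) = H(X) + H(Y \given X, L(X)) = H(X) + H(Y \given L(X)). \]
Since $L(X) = L(Y)$ almost surely and $L(Y)$ is a function of $Y$, another chain-rule step gives $H(Y \given L(X)) = H(Y \given L(Y)) = H(Y) - H(L(Y))$. By (E1), $H(X) = H(Y) = \log|A|^k$ (both are uniform on $A^k$) and $H(L(Y)) \le nm \log q$ (since $L(Y)$ takes values in $\F_q^{nm}$), hence
\[ H(X, Y) \ge 2\log|A|^k - nm\log q = \log\!\left(|A|^{2k}/q^{nm}\right), \]
which is condition (i) of \thref{def:block}. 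The Sidorenko bound $|\solA{L^*}{A}| \ge 2^{H(X,Y)} \ge |A|^{2k}/q^{nm}$ then follows from (E1).

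The argument has essentially no serious obstacle; it is a clean assembly of the rules (E1)--(E4). The one conceptual point worth highlighting is why the conditional-independence coupling is the right choice: a uniform distribution on $\solA{L^*}{A}$ would achieve the entropy bound but typically fail marginal uniformity, while the diagonal coupling $Y = X$ has the correct marginals but supplies only $\log|A|^k$ worth of entropy. Taking $Y$ conditionally independent of $X$ given $L(X)$ threads both requirements simultaneously, and is exactly the gadget that will need to be iterated along a tree in the proof of \thref{thm:main2-entropy}.
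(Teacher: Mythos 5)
Your construction (take $X$ uniform on $A^k$ and $Y$ a conditionally independent copy of $X$ given $L(X)$) is exactly the coupling used in the paper, which describes the same joint distribution by first sampling $t$ with probability $|S(t)|/|A|^k$ and then two independent uniform points of $S(t)$; your entropy computation $H(X,Y)=2H(X)-H(L(X))$ is the paper's as well. The proof is correct and essentially identical to the paper's.
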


\begin{proof}
Let $L_1,\ldots,L_m$ be $m$ linearly independent forms that define $L.$
Let $n\ge 1$ and $A \se \Fqn.$ For $t=(t_1,\ldots,t_m) \in (\Fqn)^m$, define 
$$S(t) = \{ u \in A^k: L_i(u) = t_i \text{ for each } i \in [m]\}.$$

To define the random variable $X=(X_1,\ldots,X_{2k})$ taking values in $\solA{L^*}{A},$ first pick $t\in(\Fqn)^m$ with probability $|S(t)|/|A|^k,$ then pick $u=(u_1,\ldots,u_k)$ and $v=(v_1,\ldots,v_k)$ uniformly at random in $S(t),$ both choices being independent. Note that this guarantees that $L(u)=L(v),$ and thus $(u,v)$ is indeed a solution to $L^*(u,v)=0.$ 
Writing $U$ for the random variable $X_{[k]}$ and $V$ for $X_{[2k]\sm[k]}$ we have, for fixed $u$ and $v$ in $A^k,$ 
\begin{align*}
    \Pr\big( (U,V) = (u,v) \big) 
    &=\ind_{\{L(u)=L(v)\}} \frac{|S(L(u))|}{|A|^k}\cdot \frac{1}{|S(L(u))|^2}\nonumber\\
    &=\ind_{\{L(u)=L(v)\}} \frac{1}{|A|^k |S(L(u))|}.
\end{align*}
Summing the right-hand-side over all $v\in A^k,$ we obtain that 
$$\Pr\big( U = u  \big) = \sum_{v\in S(L(u))} \Pr\big( (U,V) = (u,v) \big) 
= \frac{1}{|A|^k}$$
for every $u\in A^k,$ thus $U$ is uniform on $A^k.$ By symmetry, the same is true for $V.$ This verifies~\ref{block2} of \thref{def:block}. 
We now prove that~\ref{block1} holds as well. 

Note that by construction, $U$ and $V$ are conditionally independent given $L(U),$ and conditioned on $L(U)=t,$ $U$ and $V$ are identically distributed. Thus, using the properties of entropy \ref{it:E1}--\ref{it:E4}, we have
\begin{align}\label{eq:ent_eg}
H(U,V) &= H(U,V,L(U)) = H(U) + H(V \given U, L(U))
    = H(U) + H(V \given L(U))\nonumber \\
    &= H(U) + H(U \given L(U)) 
    = H(U) + H(U, L(U)) - H(L(U)) \nonumber \\
    &= 2H(U) - H(L(U)).
\end{align}
 
Now, $U$ is uniform on $A^k,$ so $H(U) =\log(|A|^k),$ by~\ref{it:E1}. By the same property, $H(L(U))\le \log(q^{nm}).$ We deduce that $H(X)=H(U,V)\ge \log(|A|^{2k}/q^{nm}),$ which is \thref{def:block}~\ref{block1}. 

Finally, since $X$ is a random variable taking values only in $\solA{L^*}{A}$ by construction, we get that $|\solA{L^*}{A}|\ge |A|^{2k}/q^{nm},$ again by~\ref{it:E1}. Thus, $L^*$ is Sidorenko.
\end{proof}

Next, we show that gluing a Sidorenko system (that has a block) and a symmetric system $L^*=(L,-L)$ in a certain way yields a new Sidorenko system.

\begin{thm}\thlabel{thm:entropy-tree}
Let $S=S(x_1,\ldots,x_k)$ be an $(m\times k)$-system and let $B=\{i_1,\ldots,i_{|B|}\}\se[k]$ be a block for~$S.$ Let $L$ be a $|B|$-variable system. Let $S^*$ be the system obtained from $S$ by including the equations $L(x_{i_1},\ldots,x_{i_{|B|}}) - L(x_{k+1},\ldots,x_{k+|B|})$. Then $S^*$ is Sidorenko. Moreover, every block of $S$ is a block of $S^*$ and $[k+|B|]\sm[k]$ is a block of $S^*.$
\end{thm}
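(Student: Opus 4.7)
The plan is to adapt the entropy-method argument of \thref{thm:ent2}. Let $\ell$ denote the number of equations in $L$, so $S^*$ is an $((m+\ell)\times(k+|B|))$-system. Fix $n\ge 1$ and $A\se\Fqn$. Since $B$ is a block of $S$, I obtain a random variable $Y = (Y_1,\dots,Y_k)$ on $\solA{S}{A}$ with $H(Y) \ge \log(|A|^k/q^{nm})$ and $Y_B$ uniform on $A^{|B|}$. I extend $Y$ to $X = (Y,Z) \in \solA{S^*}{A}$ by sampling $Z$ conditionally independent of $Y$ given $T := L(Y_B)$, with the same conditional distribution as $Y_B$ given $T$---equivalently, given $T = t$, $Z$ is uniform on $\{z \in A^{|B|} : L(z) = t\}$. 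By construction, both marginals $X_B = Y_B$ and $X_{[k+|B|]\sm[k]} = Z$ are uniform on $A^{|B|}$ (the latter because $Z$ inherits $Y_B$'s marginal distribution).

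The entropy computation is parallel to \eqref{eq:ent_eg}. Using the chain rule and the conditional independence of $Z$ and $Y$ given $T$,
\[
H(X) = H(Y) + H(Z \given T) = H(Y) + H(Y_B \given T) = H(Y) + H(Y_B) - H(T).
\]
The uniformity of $Y_B$ gives $H(Y_B) = \log|A|^{|B|}$, and $H(T) \le \log q^{n\ell}$ by~\ref{it:E1}, so
\[
H(X) \ge \log\bigl(|A|^{k+|B|}/q^{n(m+\ell)}\bigr).
\]
Applying~\ref{it:E1} again yields $|\solA{S^*}{A}| \ge 2^{H(X)} \ge |A|^{k+|B|}/q^{n(m+\ell)}$, so $S^*$ is Sidorenko; and the uniformity of $X_B$ and $Z$ shows that $B$ and $[k+|B|]\sm[k]$ are blocks of $S^*$.

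For a general block $B'\neq B$ of $S$, I would rerun the construction with the block-$B'$ distribution $Y'$ of $S$ (so $Y'_{B'}$ is uniform on $A^{|B'|}$) in place of $Y$, obtaining $X'$ with $X'_{B'}$ uniform. Here the identity becomes $H(X') = H(Y') + H(Y'_B) - H(L(Y'_B))$, but the clean step $H(Y'_B) = \log|A|^{|B|}$ is lost because $Y'_B$ need not be uniform. The hard part will be to recover $H(X') \ge \log(|A|^{k+|B|}/q^{n(m+\ell)})$ in this case; my plan is to take $Z' \given Y'$ uniform on the $L$-fibre at $L(Y'_B)$, which gives $H(Z'\given Y') = \E\bigl[\log g(L(Y'_B))\bigr]$ with $g(t) = |\{z\in A^{|B|}: L(z) = t\}|$, and then to show that the distribution of $L(Y'_B)$ under a suitably chosen block-$B'$ distribution agrees with $g(\cdot)/|A|^{|B|}$ (the distribution of $L(U)$ for $U$ uniform on $A^{|B|}$), so that $\E\bigl[\log g(L(Y'_B))\bigr] \ge \log(|A|^{|B|}/q^{n\ell})$ as needed. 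Establishing this propagation of uniformity through the linear constraints of $S$ is the most delicate step of the argument.
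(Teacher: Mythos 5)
Your construction and entropy computation for the main claims --- that $S^*$ is Sidorenko and that $B$ and $[k+|B|]\sm[k]$ are blocks of $S^*$ --- are correct and essentially identical to the paper's argument: the paper likewise takes the witness $X$ for the block $B$, adjoins a conditionally independent copy $Y$ of $X_B$ given $L(X_B)$, and computes $H(X,Y)=H(X)+H(X_B)-H(L(X_B))\ge \log\bigl(|A|^{k+|B|}/q^{n(m+m')}\bigr)$.

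The gap is in the clause ``every block of $S$ is a block of $S^*$'', which you correctly flag as the delicate point but do not prove, and for which your proposed route is unlikely to succeed. Starting from a \emph{separate} witness $Y'$ for a block $B'\neq B$ loses control of the $B$-marginal: \thref{def:block} imposes nothing on the distribution of $Y'_B$ beyond what subadditivity of entropy forces, so there is no reason for $L(Y'_B)$ to be distributed like $L(U)$ with $U$ uniform on $A^{|B|}$, and the inequality $H(Y'_B)-H(L(Y'_B))\ge \log(|A|^{|B|}/q^{n\ell})$ that your computation needs is not available for an arbitrary witness (a witness with $Y'_{B'}$ uniform and full entropy can in principle have $Y'_B$ highly concentrated). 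The paper avoids this entirely by never changing the witness: for any $B'\se[k]$ such that the marginal $X_{B'}$ of the \emph{chosen} $X$ is uniform on $A^{|B'|}$, the marginal $(X,Y)_{B'}=X_{B'}$ is still uniform, so $(X,Y)$ certifies $B'$ as a block of $S^*$. This one-witness reading is exactly what the application in \thref{thm:main2-entropy} requires: in the tree induction the random variable built at each stage is simultaneously uniform on every tuple $\xv^{(u)}$ (as in \thref{thm:ent2}, where the single variable $(U,V)$ certifies both blocks), so all the relevant blocks are witnessed by one distribution and propagate for free. You should either adopt this observation in place of your ``rerun with $Y'$'' plan, or strengthen the inductive statement so that a single random variable witnesses all blocks at once.
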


\begin{proof}[Proof of \thref{thm:entropy-tree}]
Let $b=|B|$ and by reordering the variables we may assume without loss of generality that $B=[b].$ Suppose that $L$ is an $(m'\times b)$-system for some $m'\ge 1.$ Note that then $S^*$ is a $(k+b)$-variable system defined by $m+m'$ (linearly independent) equations. 

Let $n\ge 1$ and let $A\se\Fqn.$ Let $X=(X_1,\ldots,X_k)$ be a random variable taking values in $\solA{S}{A}$ such that $H(X)\ge \log(|A|^k/q^{nm})$ and the distribution of $X_B$ is uniform on $A^b,$ which exists since $B$ is a block for $S$, cf.\ \thref{def:block}. Now, let $Y$ be a  conditionally independent copy of $X_B$ given $L(X_B)=L(Y).$ In particular, $Y$ is independent of $X_{b+1},\ldots,X_k.$ We note that $L(X_B)$ is itself a random variable taking values in $(\Fqn)^{m'}.$ 

The variable $Y$ is uniformly distributed on $A^b$ as its distribution is identical to that of $X_B$. Furthermore, $(X,Y)$ takes values in $\solA{S^*}{A},$ since $X$ takes values in $\solA{S}{A}$ and $Y$ is constructed such that $L(X_B)=L(Y).$
 
As in the previous proof, properties (E1)-(E4) now imply that 
\begin{align}
H(X,Y) &= H(X,Y,L(X_B)) = H(X) + H(Y \given X, L(X_B))
    = H(X) + H(Y \given L(X_B))\nonumber \\
    &= H(X) + H(X_B \given L(X_B)) 
    = H(X) + H(X_B, L(X_B)) - H(L(X_B)) \nonumber \\
    &= H(X) +H(X_B) - H(L(X_B)) 
    \ge \log\left(\frac{|A|^k}{q^{nm}}\right)+\log\left(|A|^b\right)-\log\left(q^{nm'}\right),
\end{align}
by the assumption on $H(X)$, since $X_B$ is uniform on $A^b$ and by~\ref{it:E1} for both $H(X_B)$ and $H(L(X_B)).$
It follows that 
$$H(X,Y)\ge \log\left(\frac{|A|^{k+b}}{q^{n(m+m')}}\right),$$
so $|\solA{S^*}{A}|\ge |A|^{k+b}/q^{n(m+m')},$ again by~\ref{it:E1}. Since $n$ and $A$ are arbitrary, it follows that the $((m+m')\times (k+b))$-system $S^*$ is Sidorenko.

Finally, it is clear that if the marginal random variable $X_{B'}$ of $X$ is uniform on $A^{|B'|}$ for some $B'\se[k],$ then the same is true when viewing $X_{B'}$ as a marginal random variable of $(X,Y).$ Thus, any block of $S$ is a block of $S^*.$ 
\end{proof}

The proof of \thref{thm:main2-entropy} now follows easily by induction. \thref{thm:ent2} provides the base case of when the graph template consists of a single edge. In the inductive step, \thref{thm:entropy-tree} then allows us to add a leaf to a block of an existing Sidorenko system.

\begin{proof}[Proof of \thref{thm:main2-entropy}]

    We prove by induction on $r$ that if  $T$ is a tree on $[r]$ and $\xv^{(1)},\ldots, \xv^{(r)}$ is a partition of variables of $L$ into tuples so that every equation in $L$ is of the form 
    $L'(\xv^{(u)})=L'(\xv^{(v)})$ for some $uv \in E(T),$ then the subset $B_u\se [k]$ of indices corresponding to the tuple $\xv^{(u)}$ is a block for $L,$ for every $u\in [r].$  
    
    For $r=2,$ the associated graph template of $L$ is a set of parallel edges, each corresponding to one of the linear forms $L_1,\ldots,L_m,$ between two tuples. That is, each $L_i$ is of the form $L'(\xv^{(1)}) = L'(\xv^{(2)})$. \thref{thm:ent2} implies that both $\xv^{(1)}$ and $\xv^{(2)}$ are blocks for $L,$ and that $L$ is Sidorenko.
    
    For $r\ge 3,$ let $T$ be a template graph for $L$ which is a tree. Assume without loss of generality that the vertex $r$ is a leaf and that $r-1$ is its unique neighbour in $T.$ Let $L_1,\ldots,L_{m'}$ be the linear forms associated with this edge, i.e.~they are of the form $L_{j}'(\xv^{(r-1)})=L_{j}'(\xv^{(r)}).$ The remaining forms $L_{m'+1},\ldots,L_m$ of $L$ form a system, call it $L^{-}$ that admits $T'$ as a template graph, where $T'$ is obtained from $T$ by removing $r$ and all (parallel) edges between $r$ and $r-1.$ By induction, each $B_u$ is a block for $L^-$, for every $u\in[r-1].$ \thref{thm:entropy-tree} now implies that every $B_u$ is a block for $L,$ for every $u\in [r],$ and that $L$ is Sidorenko.
    This completes the proof. 
\end{proof}

\section{Properties of $(2 \times k)$-systems} \label{sec:2xk}

In this section, we are specifically concerned with systems defined by two (non-equivalent) linear forms, i.e.~with $(2\times k)$-systems. Of particular interest are those systems with $s(L)=k-1.$ The motivation behind studying these systems stems from 
Theorem~3.1 in~\cite{klm21-uncommon}. There, the uncommonness of a system $L$ is deduced from understanding subsystems $L_B$ which are  single-equation systems, which are fully understood, or $(2\times \ell)$-systems with $s(L_B)=s(L)=\ell-1.$ 
\thref{thm:main1-odd} implies that any Sidorenko system must have even $s(L).$ When $s(L)=2$, then $L$ is either redundant or not Sidorenko, by  Corollary~3.2 in~\cite{klm21-uncommon}. So here we mainly consider the case where $s(L)\ge 4$, and such a system is irredundant. We first present a general criterion and then apply it to the special case of $k=5.$ 

We will see that there are $(2\times 5)$-systems with $s(L)=4$ that are  common but not Sidorenko. Let us first give a more convenient definition of commonness, analogously to \thref{def:sid}.
\begin{defn}
Let $L$ be an irredundant $(m \times k)$-system. Say that $L$ is \emph{common} if for all $n$ and all $A \se \Fqn$, we have 
$$\Lambda_{L} (A)+\Lambda_{L} (\compl{A}) \ge 2^{1-k}.$$
\end{defn}

It will be convenient in this section to use the functional perspective on Sidorenko and common systems. For a system $L=L(x_1,\ldots,x_k)$ and a function $f:\Fqn\to \R$ define 
\begin{equation*}
\Lambda_{L} (f) := \frac{1}{|\solA{L}{\Fqn}|}\sum_{\bx \in \solA{L}{\Fqn}} f(x_1)f(x_2) \dots f(x_k), 
\end{equation*}
and note that this definition agrees with~\eqref{eq:lamf} when $f$ is the indicator function of a set $A.$  
For a function $f:\Fqn \rightarrow [0,1]$, and a linear form $E = a_1 x_1 + \ldots + a_kx_k$, let 
$$\tau_E(f):= \sum_{r \in \widehat{\Fqn}\setminus \{0\}}\prod_{i = 1}^{k} \widehat{f}(a_i r),$$
where $\widehat{\Fqn}$ is the group of characters of $\Fqn$ and $\widehat{f}(r) = \E_{x \in \Fqn}f(x)\exp(2\pi i/p)^{- \Tr(r\cdot x)}$ is the Fourier transform of $f$ (where $p$ is the characteristic of the field $\F_q$ and $\Tr:\F_q\to\F_p$ is the standard trace map). We note that $\widehat{f}(-r) =\overline{\widehat{f}(r)}$ always holds. 
The `twisted-convolution' Fourier identity  
\begin{equation}\label{eq:twisted}
    \Lambda_E(f) = (\E f)^k + \tau_E(f)
\end{equation}
easily follows from the definition, see also equation~(2) in~\cite{fpz19}. 

Let $k \ge 5$ and let $L$ be a $(2 \times k)$-system with $s(L) = k-1$, where $E_1$ and $E_2$ are two linear forms defining $L.$ Note that the condition $s(L)= k-1$ implies that, for every $i\in [k],$ some linear combination of $E_1$ and $E_2$ yields an equation $L_i$ induced by $L$ that has support $[k]\setminus \{i\}.$ Note also that $L_i$ is unique up to scaling, and scaling does not change the solution set $\solA{L_i}{A}$. 

So given a $(2 \times k)$-system $L$ with $s(L) = k-1$, we denote by $L_i$ the unique (up to scaling) linear form induced by $L$ with support  $[k]\setminus \{i\}.$ Clearly, any two of the $L_i$'s also define $L.$ The following says that we can deduce whether a system is common or uncommon by understanding these equations $L_i.$ 

\begin{lem}\thlabel{cor:sidfun}
    Let $k \ge 5$ be odd and let $L$ be a $(2 \times k)$-system with $s(L) = k-1$. 
    \begin{enumerate}
        \item If for all $n$ and $f:\Fqn \rightarrow [0,1]$, we have $\sum_{i=1}^{k} \tau_{L_i}(f) \ge 0$, then $L$ is common.
        \item If there exists an integer $n\ge 1$ and a function $f: \Fqn \rightarrow [0,1]$ with $\sum_{i=1}^{k} \tau_{L_i}(f) < 0$, then $L$ is uncommon. 
    \end{enumerate}
\end{lem}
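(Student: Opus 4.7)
The plan is to derive an explicit Fourier identity for $\Lambda_L(f)+\Lambda_L(1-f)$ that extends~\eqref{eq:twisted} to the $(2\times k)$-system $L$, and then read off both parts of the lemma from the sign structure that emerges because $k$ is odd.

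First, writing the rows of $L$ as $(a_{1,j})_{j=1}^k$ and $(a_{2,j})_{j=1}^k$, a standard Fourier inversion on $\Fqn$ (together with the fact that the annihilator of $\sol(L,\Fqn)$ inside $(\Fqn)^k$ is exactly the $\Fqn$-span of the rows of $L$) gives
\begin{equation*}
\Lambda_L(f)\;=\;\sum_{(c_1,c_2)\in(\Fqn)^2}\;\prod_{j=1}^{k}\widehat{f}\bigl(c_1 a_{1,j}+c_2 a_{2,j}\bigr).
\end{equation*}
The assumption $s(L)=k-1$ now enters decisively. For every nonzero $(c_1,c_2)$, the vector $(c_1 a_{1,j}+c_2 a_{2,j})_{j=1}^k$ is the coefficient vector of a linear form induced by $L$, and $s(L)=k-1$ forces this vector to have either no zero coordinate, or exactly one zero coordinate at some index $i\in[k]$. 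Let $S_i\subseteq(\Fqn)^2\setminus\{(0,0)\}$ consist of the nonzero $(c_1,c_2)$ whose $i$-th coordinate vanishes; each $S_i$ is then the nonzero part of an $\Fqn$-line, and the minimality of $s(L)$ forces the $S_i$ to be pairwise disjoint. On $S_i$ the coefficient vector is a scalar multiple of that of $L_i$, and a short calculation shows that summing the product $\prod_j \widehat{f}(\cdot)$ over $S_i$ reproduces exactly $\tau_{L_i}(f)$. Collecting the remaining full-support terms into a single quantity $R(f)$, one arrives at
\begin{equation*}
\Lambda_L(f)\;=\;\alpha^k+\sum_{i=1}^{k}\tau_{L_i}(f)+R(f),\qquad\text{where }\alpha=\E f.
\end{equation*}

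Applying the same decomposition to $1-f$, using $\widehat{1-f}(0)=1-\alpha$ and $\widehat{1-f}(r)=-\widehat{f}(r)$ for $r\neq 0$, each $S_i$-contribution picks up a sign $(-1)^{k-1}$ and the full-support contribution picks up $(-1)^k$. Since $k$ is odd these equal $+1$ and $-1$ respectively, so the $R$-terms cancel in $\Lambda_L(f)+\Lambda_L(1-f)$, and a short manipulation produces the clean identity
\begin{equation*}
\Lambda_L(f)+\Lambda_L(1-f)\;=\;\alpha^k+(1-\alpha)^k+\frac{1}{\alpha}\sum_{i=1}^{k}\tau_{L_i}(f).
\end{equation*}
Combined with the elementary inequality $\alpha^k+(1-\alpha)^k\geq 2^{1-k}$ (equality at $\alpha=1/2$), both parts of the lemma fall out. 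Part (i) is immediate: the hypothesis makes the last summand nonnegative for every $f\in[0,1]^{\Fqn}$, so in particular for indicators $f=\ind_A$. For part (ii), substituting the witness $f$ with $\E f=1/2$ gives $\Lambda_L(f)+\Lambda_L(1-f)=2^{1-k}+2\sum_i \tau_{L_i}(f)<2^{1-k}$; a standard randomised-rounding argument (replacing $f$ by the indicator of a random set, and absorbing the contribution of degenerate solutions by first lifting to a sufficiently large dimension) promotes this functional witness to a set witness, and so $L$ is uncommon.

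The main obstacle is the Fourier bookkeeping in the first step: verifying that $s(L)=k-1$ really does force the $S_i$ to be pairwise disjoint (two vanishing coordinates would give an induced equation of length $\leq k-2$), and that the sum over each $S_i$ genuinely equals the intrinsic quantity $\tau_{L_i}(f)$, which is only defined up to the scaling freedom of $L_i$. Once this is in place, the oddness of $k$ supplies the required sign cancellations for free and both parts of the lemma flow from the same identity.
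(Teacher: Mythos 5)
Your route is genuinely different from the paper's and, up to one bookkeeping slip, it works. The paper never writes down the two-dimensional Fourier expansion of $\Lambda_L$; instead it applies the inclusion--exclusion identity~\eqref{eq:in-ex} in physical space, uses Observation~\ref{ob:n-vec} together with $s(L)=k-1$ to evaluate $t_B(L,A)=\alpha^{|B|}$ for all $|B|\le k-2$, identifies $t_{[k]\setminus\{i\}}(L,A)=\Lambda_{L_i}(A)$, and uses the oddness of $k$ to cancel the top term $t_{[k]}$; the resulting identity $\Lambda_L(A)+\Lambda_L(\compl{A})=\alpha^k+(1-\alpha)^k+\sum_i\Lambda_{L_i}(A)-k\alpha^{k-1}$ is then converted to $\tau$'s via~\eqref{eq:twisted}. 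You work entirely on the dual side, stratifying the frequency lattice $\{(c_1a_{1,j}+c_2a_{2,j})_j\}$ by support; your use of $s(L)=k-1$ (each $\ker\phi_j\subseteq\F_q^2$ is a line, and distinct indices give distinct lines, so the $S_i$ are disjoint and no frequency vector has two zero coordinates) is the exact Fourier dual of the paper's rank-reducing-set computation, and the oddness of $k$ enters for you as the cancellation $R(1-f)=-R(f)$ rather than as the cancellation of the $B=[k]$ term. Both proofs are about the same length; yours has the mild advantage of producing the functional identity directly for arbitrary $f$, the paper's of avoiding any discussion of annihilators.

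The slip: with the paper's convention that $\tau_{L_i}$ is taken over the $k-1$ variables in the support of $L_i$, the sum of $\prod_j\widehat f(\cdot)$ over $S_i$ equals $\widehat f(0)\,\tau_{L_i}(f)=\alpha\,\tau_{L_i}(f)$, not $\tau_{L_i}(f)$, because the $i$-th factor is $\widehat f(0)$. Carrying this through, the $S_i$-contribution to $\Lambda_L(1-f)$ is $(1-\alpha)(-1)^{k-1}\tau_{L_i}(f)=(1-\alpha)\tau_{L_i}(f)$, so the correct combined identity is
\begin{equation*}
\Lambda_L(f)+\Lambda_L(1-f)=\alpha^k+(1-\alpha)^k+\sum_{i=1}^{k}\tau_{L_i}(f),
\end{equation*}
with coefficient $1$ rather than your $1/\alpha$ (this matches the paper's~\eqref{eq:aux346} after substituting~\eqref{eq:twisted}). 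Since only the sign of the correction term matters, neither conclusion is affected, but you should fix the constants. Everything else --- the injectivity of $(c_1,c_2)\mapsto(c_1a_{1,j}+c_2a_{2,j})_j$, the invariance of $\tau_{L_i}$ under rescaling the form, and the lifting-plus-random-rounding step in (ii), which is exactly what the paper does and where non-degeneracy ($s(L)>2$) is needed --- is in order.
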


This is a result in the spirit of Theorem 3.1 from \cite{klm21-uncommon}, it allows us to relate the problem of understanding the system $L$ to that of understanding the equations $L_i$. We demonstrate below how (ii) can be utilised and find a large class of $(2\times 5)$-systems with $s(L)=4$ that are uncommon, and thus not Sidorenko. We also use (i) to show commonness of two peculiar $(2\times 5)$-systems with $s(L)=4$ that demonstrate that finding a characterisation of common and Sidorenko systems of even just two equations is much more complicated.

\begin{proof}[Proof of \thref{cor:sidfun}]

Let $n\ge 1$ be an integer, let $A \subseteq \Fqn$ and let $\alpha = |A|/q^n$. We first claim that 
\begin{align}\label{eq:aux346}
\Lambda_L(A) + \Lambda_L(\compl{A}) = \alpha^k + (1- \alpha)^k + \sum_{i=1}^{k} \Lambda_{L_i}(A) - k\alpha^{k-1}.
\end{align}		
Indeed, using \eqref{eq:in-ex}, the fact that $k$ is odd and that $t_{[k]}(L,A) = \Lambda_L(A)$ we obtain that 
$$\Lambda_L(A) + \Lambda_L(\compl{A}) = \sum_{\substack{B \subseteq [k]\\ |B| \le k-1}}(-1)^{|B|}t_B(L,A).$$
For $|B| \le k-2 = s(L)-1$, Observation~\ref{ob:n-vec} gives $t_B(L,A) = |A|^{|B|}q^{-n|B|} = \alpha^{|B|}$. Then~\eqref{eq:aux346} follows by noticing that $t_{[k]\setminus \{i\}}(L,A) = \Lambda_{L_i}(A)$.

To see that (i) holds, let $f$ be the indicator function of $A$ and note that each $L_i$ is an equation in $k-1$ variables. Then~\eqref{eq:aux346} together with the hypothesis of (i) and~\eqref{eq:twisted} imply that $\Lambda_L(A) + \Lambda_L(\compl{A}) \ge \alpha^k + (1- \alpha)^k\ge 2^{k-1},$ by convexity. Since $n$ and $A$ were arbitrary this implies that $L$ is common. 

The argument for (ii) is essentially the same as showing that a functional definition of Sidorenko systems is equivalent to \thref{def:sid} (which is included in~\cite{fpz19}). We repeat it here for completeness. Let $f:\Fqn\to[0,1]$ be a function (for some $n\ge 1$) such that $\sum_i \tau_{L_i}(f)<0$. We may assume that  $\E f=1/2$, since $f$ can be replaced by the function $\frac 12 (1+ f - \E f)$, using the identity $\tau_L \left( \frac 12 (1+ f - \E f)\right) = 2^{-k} \tau_L(f)$.
Let $n'\ge 1$ be large enough and let $A\se \F_q^{n+n'}$ be a set chosen at random by including an element $(x,y)\in \Fqn \times \F_q^{n'}$ with probability $f(x).$  
Then the expected size of $A$ is $q^n/2.$ Furthermore, the expectation of $\sum_{i=1}^k\Lambda_{L_i}(A)$ is equal to $\sum_{i=1}^k\Lambda_{L_i}(f)+o_{n'}(1),$ where the $o_{n'}(1)$ term accounts for the proportion of solutions $\xv$ to $L(\xv)=0$ in $\Fqn$ with not all coordinates distinct, which goes to zero as $n'\to\infty$ (we use here that $s(L) >2$, which implies that $L$ is irredundant). 
Now, by \eqref{eq:twisted},
$$\sum_{i=1}^k\Lambda_{L_i}(f) = \sum_{i=1}^k\tau_{L_i}(f) + k (\E f)^{k-1} < k 2^{1-k},$$ by assumption. 
This together with~\eqref{eq:aux346} implies that, for $n'$ large enough, there exists $A\se \F_q^{n+n'}$ such that 
$\Lambda_L(A)+\Lambda_L(\compl{A}) < 2^{k-1}.$ Thus, $L$ is uncommon. \end{proof}

We now turn our attention to the special case of $(2\times 5)$-systems with $s(L)=4.$ 
\thref{t:uncommon} (and the characterisation of one-equation systems) implies that if {\em all} the shortest equations induced by $L$  are not Sidorenko, then $L$ is not Sidorenko. 
\thref{cor:sidfun}(ii) allows us to extend this to the case when only four of the five shortest equations are not Sidorenko, under some additional constraints. 
To describe those constraints we need the following. For $a_1,\ldots,a_4, b \in \F_q$, some $n\ge 1$, the linear form $\sum_{i \in [4]} a_i x_i$ is called \emph{$b$-coincidental} if for every $i$ there is $j$ such that $a_i a_j^{-1} \in\{\pm b,\pm b^{-1}\}$. 

\begin{thm} \thlabel{p:sid-one}
Let $q$ be an odd prime power, and let $L$ be a $(2 \times 5)$-system over $\F_q$ with $s(L)=4$. Suppose that $L_1$ is Sidorenko with coefficients $\pm 1, \pm b \in \F_q$, where $b \neq \pm 1$, and that $L_2, \dots L_5$ are not Sidorenko. If $L_2$ is not $b$-coincidental, then $L$ is not Sidorenko.
\end{thm}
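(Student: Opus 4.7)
My plan is to apply \thref{cor:sidfun}(ii): construct $n \geq 1$ and a function $f : \F_q^n \to [0,1]$ with $\E f = 1/2$ such that $\sum_{i=1}^5 \tau_{L_i}(f) < 0$. Since $L_1$ is a Sidorenko linear form of length $4$, identity~\eqref{eq:twisted} gives $\tau_{L_1}(f) = \Lambda_{L_1}(f) - (\E f)^4 \geq 0$ for every $[0,1]$-valued $f$. Hence it suffices to arrange $\sum_{i=2}^5 \tau_{L_i}(f) < 0$.

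To build $f$, I would follow the random-Fourier-coefficients technique of Fox, Pham and Zhao~\cite{fpz19}. Take $f = \tfrac12 + \epsilon g$ for small $\epsilon > 0$, where $g : \F_q^n \to \R$ is real, $\E g = 0$, and $\hat g$ is supported on a carefully chosen symmetric set $T \subset \widehat{\F_q^n}\setminus\{0\}$. Then
\[
\tau_{L_i}(f) \;=\; \epsilon^4 \sum_{r:\, a_j^{(i)} r \in T \text{ for every } j} \prod_j \hat g(a_j^{(i)} r),
\]
where $a_j^{(i)}$ denote the non-zero coefficients of $L_i$. Choosing $\{\hat g(r)\}_{r \in T}$ to be independent random complex numbers of fixed modulus (subject to $\hat g(-r) = \overline{\hat g(r)}$ so that $g$ is real), the goal is to show that $\E\tau_{L_2}(f) < 0$ while $\E\tau_{L_j}(f) = 0$ for $j = 3, 4, 5$. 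The first estimate should follow, in the spirit of the FPZ construction for single non-Sidorenko forms, from the fact that $L_2$'s coefficients fail to pair up to sum to zero: an appropriate choice of $T$ and of the phase distribution then gives a negative dominant term.

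The non-$b$-coincidentality of $L_2$ enters critically in the vanishing $\E\tau_{L_j}(f) = 0$ for $j = 3, 4, 5$. Because $L$ has rank two, each $L_j$ lies in $\mathrm{span}(L_1, L_2)$, so its coefficient vector is determined by those of $L_1$ and $L_2$. A surviving contribution to $\E\tau_{L_j}(f)$ after random averaging would require a ``fully paired'' coincidence pattern among the four coefficients of $L_j$; tracing such a pattern back through the linear relation $L_j = \alpha L_1 + \beta L_2$ and using that $L_1$'s coefficients are $\pm 1, \pm b$, one would obtain a coincidence among the coefficients of $L_2$ that pairs the isolated coefficient $c_*$ of $L_2$ with some other coefficient via ratio in $\{\pm b, \pm b^{-1}\}$, contradicting the hypothesis. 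The main obstacle is precisely this combinatorial case analysis: enumerating the pairing patterns that could make $\tau_{L_j}$ survive averaging and verifying that each such pattern would produce a $b$-partner for $c_*$ in $L_2$. Once this is done, any realisation of the random signs achieving the expectation bounds yields an $f$ with $\sum_i \tau_{L_i}(f) < 0$, and \thref{cor:sidfun}(ii) concludes that $L$ is not Sidorenko.
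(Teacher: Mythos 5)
Your overall strategy---invoking \thref{cor:sidfun}(ii) with a function $f$ built from random Fourier coefficients in the style of Fox, Pham and Zhao---matches the paper's, but the reduction in your first paragraph is logically backwards, and it skips precisely the hard part of the theorem. Since $L_1$ has coefficients $\pm 1,\pm b$, every summand of $\tau_{L_1}(f)$ has the form $\widehat f(r)\widehat f(-r)\widehat f(br)\widehat f(-br)=|\widehat f(r)|^2|\widehat f(br)|^2\ge 0$, so indeed $\tau_{L_1}(f)\ge 0$; but from this it does \emph{not} follow that it suffices to arrange $\sum_{i=2}^{5}\tau_{L_i}(f)<0$. You need $\sum_{i=2}^{5}\tau_{L_i}(f)<-\tau_{L_1}(f)$, i.e.\ the negative contribution must quantitatively overcome the nonnegative term you have discarded; the positivity of $\tau_{L_1}$ is the obstacle, not a simplification. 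The paper handles this by supporting $\widehat f$ on $\{0,\pm 1\}\cup\{\pm a_{22},\pm a_{23},\pm a_{24}\}$ with $\widehat f(\pm 1)=1/8$ and $\widehat f(\pm a_{2j})=\pm\eps$ for tiny $\eps$. The non-$b$-coincidentality of $L_2$ then guarantees $\widehat f(b)=\widehat f(b^{-1})=0$, so every nonzero summand of $\tau_{L_1}(f)$ avoids the large coefficient $1/8$ and $\tau_{L_1}(f)\le 8\eps^4$, while the term $\widehat f(1)\widehat f(a_{22})\widehat f(a_{23})\widehat f(a_{24})$ in $\tau_{L_2}(f)$ has modulus $\eps^3/8$, which dominates $8\eps^4$. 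Thus the hypothesis on $L_2$ is used to control $\tau_{L_1}$, not, as you propose, to force $\E\tau_{L_j}(f)=0$ for $j=3,4,5$; your proposal assigns the key hypothesis the wrong job and leaves the genuine comparison unaddressed.

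A second, related gap: with independent random signs one gets $\E\tau_{L_i}(f)=0$ for \emph{each} of $i=2,\dots,5$ (this is Claim~\ref{c:signs-uniform} in the paper, which uses that each $L_i$ is translation-invariant, non-Sidorenko, and that $q$ is odd to produce an ``isolated'' coefficient), not $\E\tau_{L_2}(f)<0$. The FPZ mechanism you cite does not yield a negative expectation from the failure of coefficients to pair up; it yields a mean-zero quantity that is positive with positive probability and hence negative with positive probability. You would need to add this step, restrict attention to the sum of maximal-modulus terms so that the sign argument actually forces that sum below $-\eps^3/8$, and bound the remaining (smaller) terms by $O(\eps)$ times the maximal ones, before \thref{cor:sidfun}(ii) can be applied.
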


We remark that in fact we prove that such an $L$ is either not translation-invariant or uncommon. Furthermore, we believe that the requirement of one of the four non-Sidorenko equations to be not $b$-coincidental is a very weak one as it seems difficult to construct a $b$-coincidental  equation $L_2$ such that all three linear combinations $L_3, L_4, L_5$ are also $b$-coincidental. 

\begin{proof}[Proof of \thref{p:sid-one}] 
    We may assume that $L$ is translation-invariant, as otherwise it is not Sidorenko. 
    For $i \in \{ 2, 3, 4, 5\}$, denote the set of coefficients of $L_i$ by $\{a_{i1}, a_{i2}, a_{i3}, a_{i4} \}$ in no particular order, with possible repetitions. Since $L_2$ is not $b$-coincidental, we may assume that $a_{21} = 1$ and that $a_{22}, a_{23}, a_{24}$ do not take values in $\{\pm b, \pm b^{-1} \}$. 

 Towards using \thref{cor:sidfun} we define a function $f: \F_q \to [0, 1]$ via its Fourier coefficients  to ensure that $\sum_{i=1}^5 \tau_{L_i}(f) $ is negative. Once  $\widehat f(h) $ is fixed, we automatically set $\widehat f(-h) = {\widehat f(h)}$ (noting that there is no need for complex conjugation since our Fourier coefficients will take real values). Firstly, set $\widehat f (0) := 1/2$ so that $\E f=1/2$, and $\widehat f (1) := 1/8$. 
Moreover, let $\eps = 2^{-10}$. If $j \in \{ 2, 3, 4\}$ and  $a_{2j} \neq \pm 1$, then set $\widehat f (a_{2j}) := \pm \eps,$ where the sign is chosen uniformly at random, choices for distinct $a_{2j}$ being independent (note that when $a_{2j}= \pm 1$, $\widehat f(a_{2j})$ is already set). For all other $h$, set $\widehat f (h) := 0$. In particular, there are only eight  values of $h \neq 0$ with $\widehat f(h) \neq 0$, and 
	\begin{equation}		\label{eq:bhat}
		\widehat f(b) = \widehat f (b^{-1}) = 0
	\end{equation} by assumption on $a_{22}, a_{23}, a_{24}$ and since $b\neq \pm 1.$
We note that $f$ is indeed a function taking values in $[0,1]$ using the inverse transform 
$f(x) = \sum_{r \in \widehat{\Fqn}} \widehat{f}(r)\exp(2\pi i/p)^{- \Tr(r\cdot x)}.$ 
 For $i \in \{2, 3, 4, 5 \}$ and $h \in \widehat{\F_q} \setminus \{0 \}$  set 
$X_i(h) :=\widehat f(a_{i1}h) \widehat f(a_{i2} h) \widehat f(a_{i3} h) \widehat f(a_{i4} h),$ and note that 
\begin{align}\label{eq:sum-taus}
&    \sum_{i=2}^5 \tau_{L_i}(f) = \sum_{i=2}^5 \sum_{h\in\F_q^{\times}}X_i(h). 
\end{align} 
Our aim is to show that there is a choice of signs in the definition of the random Fourier coefficients of $f$ such that the dominant terms in~\eqref{eq:sum-taus} are negative. We need the following. 
 \begin{claim} \label{c:signs-uniform}
 For all $i \in \{2, 3, 4, 5 \}$ and $h \in \widehat{\F_q} \setminus \{0 \},$ if $X_i(h) \neq 0$, then $X_i(h)$ is positive with  probability $1/2$ and negative with probability $1/2$. 
\end{claim}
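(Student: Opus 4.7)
The plan is first to express $X_i(h)$ as a deterministic magnitude times a monomial in the random signs, and then to reduce the claim to the combinatorial statement that at least one exponent in the monomial is odd. Let $\mathcal{P}$ be the collection of distinct pairs $\{a_{2j'},-a_{2j'}\}$ for $j'\in\{2,3,4\}$ with $a_{2j'}\notin\{\pm 1\}$; on each $P\in\mathcal{P}$, $\widehat f$ takes a random value $r_P\eps$ with $r_P\in\{\pm 1\}$ uniform and mutually independent across $\mathcal{P}$, while $\widehat f\equiv 1/8$ on $\{\pm 1\}$ and vanishes outside $\{0,\pm 1\}\cup\bigcup_{P\in\mathcal{P}}P$. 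If $X_i(h)\ne 0$, each of the four values $a_{ij}h$ must lie in $\{\pm 1\}\cup\bigcup_{P\in\mathcal{P}}P$; writing $n_0$ for the number of $j\in[4]$ with $a_{ij}h\in\{\pm 1\}$ and $n_P$ for the number with $a_{ij}h\in P$, I obtain
\[
X_i(h) = (1/8)^{n_0}\,\eps^{\,4-n_0}\prod_{P\in\mathcal{P}}r_P^{n_P}.
\]
If some $n_P$ is odd, then conditioning on the other random signs and flipping $r_P$ negates $X_i(h)$, so $X_i(h)$ is uniform on $\{\pm|X_i(h)|\}$, as claimed.

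The core step is therefore to prove that some $n_P$ is indeed odd. I would argue by contradiction: suppose every $n_P$ is even. Since $n_0+\sum_P n_P=4$, only four configurations are possible: $n_0=4$; $n_0=2$ with one pair $P$ having $n_P=2$; $n_0=0$ with one pair having $n_P=4$; or $n_0=0$ with two distinct pairs each having $n_P=2$. In every configuration, the coefficients of $L_i$, scaled by $h$, form a multiset $\{s_\ell c_\ell\}_{\ell=1}^{4}$ with $s_\ell\in\{\pm 1\}$ and nonzero $c_\ell\in\{1\}\cup\{a_{2j'}:j'\in\{2,3,4\}\}$; coefficients sharing a pair-label have equal $c_\ell$, while those of different pair-labels satisfy $c_\ell\ne\pm c_{\ell'}$ (using $a_{2j'}\notin\{\pm 1\}$ and pair-distinctness). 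Translation invariance $\sum_\ell s_\ell c_\ell=0$, grouped by pair-label, becomes either $(s_1+s_2+s_3+s_4)c=0$ (when all four share a label) or $(s_1+s_2)c+(s_3+s_4)c'=0$ with $c\ne\pm c'$ (when two labels appear); since $c,c'\ne 0$ and $s_\ell+s_{\ell'}\in\{-2,0,2\}$, each grouped sub-sum must vanish. Hence the coefficients of $L_i$ partition into cancelling pairs $\{c,-c\}$, and $L_i$ is Sidorenko by the Saad--Wolf criterion~\cite{sw17}, contradicting the hypothesis that $L_i$ is not Sidorenko for $i\in\{2,3,4,5\}$.

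The main obstacle is the bookkeeping in the case analysis, where one must track how the four values $a_{ij}h$ distribute among pair-labels and then match this with translation invariance; the pay-off is that each ``even'' configuration is forced to yield a cancelling-pairs structure on $L_i$, which is exactly what the hypothesis rules out. Once the parity statement is secured, the claim follows immediately from the conditioning argument in the first paragraph.
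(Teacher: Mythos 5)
Your proof is correct and follows essentially the same route as the paper: both arguments combine translation-invariance with the non-Sidorenko hypothesis on $L_i$ to rule out the configuration in which the coefficients of $L_i$ pair up (up to sign), which is precisely the configuration that would make the sign of $X_i(h)$ deterministic. Your explicit parity bookkeeping on the exponents $n_P$ is in fact slightly more careful than the paper's one-line deduction from the existence of a coefficient that is ``unique up to sign'', since it also transparently handles the case where that coefficient lands on a position $\pm h^{-1}$ at which $\widehat f$ takes the deterministic value $1/8$ rather than a random sign.
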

	\begin{proof}
    We claim that there is $a_{i\ell} \in \{a_{i1}, a_{i2}, a_{i3}, a_{i4} \}$, such that for all $j \in [4]\setminus \{\ell\}$, we have $a_{ij} \not\in \{a_{i\ell},-a_{i\ell}\}$. 
    That is, $ a_{i\ell}$ appears only once in the multiset $\{a_{i1},  a_{i2}, a_{i3}, a_{i4}\}$ and $-a_{i\ell}$ is not an element of it. 
	This suffices to prove the claim, as the sign of $\widehat f(a_{i\ell} h)$ is independent of the remaining three factors.

	Assume the opposite, so without loss of generality, $a_{i1}, a_{i2}\in \{\pm \alpha\}$ and $a_{i3}, a_{i4}\in\{\pm \beta\}$ for some $\alpha, \beta \in \F_q^{\times}$. By assumption, $L_i$ is not Sidorenko, so (by the one-equation characterisation) its coefficients cannot be partitioned into pairs, each summing to zero. Furthermore, we assumed $L$, and thus $L_i,$ is translation-invariant, i.e.~$a_{i1}+a_{i2}+a_{i3}+a_{i4}=0.$ Thus, we reach one of the following conclusions
		\begin{align*}
		&\alpha+\alpha +  \beta +  \beta =0, \text{ or }\ \alpha+\alpha +  \beta - \beta =0, \text{ or }\ \alpha - \alpha + \beta +  \beta =0 .
		\end{align*}
	This implies that $\alpha+\beta=0$, or $\alpha=0$, or $\beta=0,$ respectively, since $q$ is odd. This contradicts our assumptions, and the claim follows. \end{proof}
        
		Let $\xi $ be the maximum modulus of $X_i(h)$ over all $i \in \{ 2,3,4,5\}$ and $h\neq 0$. Note that $\xi \geq \eps^3/8 $ since $|X_2(1) |=|\widehat f( 1) \widehat f(a_{22} ) \widehat f(a_{23} ) \widehat f(a_{24} )| = \eps^3/8$ by construction.
        Moreover, let $\zeta(f)$ be the sum of all those terms $X_i(h)$ in~\eqref{eq:sum-taus} of modulus $\xi$. We claim that there is a choice of signs for $\widehat f (a_i)$ such that
	\begin{equation} \label{eq:zeta} 
		\zeta(f) \leq -\xi \leq  -\frac 18 \eps^3.
	\end{equation}
    The proof follows the argument from~\cite{fpz19} and~\cite{klm21-uncommon}. Namely, the expectation of $\zeta(f)$ (taken over the random sign choices) is zero since $\E[X_i(h)] = 0,$ by Claim~\ref{c:signs-uniform}. Moreover, $\zeta(f)>0$ is attained when $\widehat f(h) \geq 0$ for all $h$, which occurs with positive probability. Therefore, with positive probability, $\zeta(f)<0$,  and from here onwards, we fix $f$ so that $\zeta(f)<0$. But $\zeta(f)$ is by definition a sum of terms of modulus $\xi$, so indeed, $\zeta(f) \leq -\xi$, as required. 
    
  	Next, we claim that the sum of all terms $X_i(h)$ in~\eqref{eq:sum-taus} not in $\zeta(f)$ is at most $  2^8 \eps \xi$. This holds since all terms not in $\zeta(f)$ have modulus at most $8 \eps \xi$, and there are at most $4 \times 8 = 32$ non-zero terms $X_i(h)$. 

    Finally, we claim  that $\tau_{L_1}(f)$ is dominated by $\zeta(f)$ as follows. 
    Note that every summand in $\tau_{L_1}(f)$ is of the form $\widehat f(h)\widehat f(-h)\widehat f(bh)\widehat f(-bh) = \widehat f(h)^2 \widehat f (bh)^2$ which is either zero or $\eps^4.$ Indeed,   
the only terms possibly involving Fourier coefficients of modulus $1/8$ are $\widehat f(1)^2 \widehat f (b)^2 = 0$ and $\widehat f(b^{-1})^2 \widehat f (1)^2 =0$ using~\eqref{eq:bhat}. Moreover,  $\widehat f(h)^2 \widehat f (bh)^2 = \eps^4$  for at most eight values of $h\neq 0$. 

Therefore
	\begin{equation}
	\sum_{i=1}^5 \tau_{L_i}(f)
			\leq 8\eps^4 - \xi + 2^8\eps \xi 
		\leq (2^6\eps -1 + 2^8 \eps) \xi 
		<-\xi/2,
		\end{equation}
	recalling that 
    $\eps = 2^{-10}$, 
 	which implies the theorem by \thref{cor:sidfun}(ii).
\end{proof}

\thref{p:sid-one} indicates that the presence of just one Sidorenko equation among the five shortest equations $L_i$ is likely not to be enough for the whole system to be Sidorenko. What if two or more of the shortest equations are Sidorenko? We do not have a satisfying answer for these situations yet. Instead, we present two examples of systems where exactly three of the five shortest equations are Sidorenko. Both of these are common, neither proof of which gives that they are also Sidorenko. In fact, we can verify that the first example is not Sidorenko. We discuss both examples further in Section~\ref{sec:conclusion}. 

\begin{example}	\thlabel{ex:non-AQ}
Consider the system $L$ generated by $(x_1 - x_3) + 2(x_4 - x_5)$ and $(x_2 - x_4) + 2(x_3 - x_5).$ 
Note that the linear forms $L_1,\ldots, L_5$ have coefficient matrix
			$$\begin{pmatrix}
				1 & 0 & -1 & 2& -2\\
				0 & 1 & 2 & -1 & -2 \\
				2 & 1 & 0  &  3 & -6 \\
				1 & 2 & 3 &  0 & -6 \\
				1 &-1 & -3 & 3 & 0 \\
			\end{pmatrix},$$
and that $L_1$, $L_2$ and $L_5$ are Sidorenko. 
We claim that $L$ is common over $\F_q$, for any prime $q>3$, where the coefficients are taken mod $q$. 		
Indeed, let $n\ge 1$ and let $f:\Fqn\to[0,1]$ be a function. 
Consider $$\sum_{i=1}^5\tau_{L_i}(f) = 
    \sum_{h \in \widehat{\Fqn} \setminus \{0\}}
        2|\widehat{f}(h)|^2|\widehat{f}(2h)|^2 + |\widehat{f}(h)|^2|\widehat{f}(3h)|^2 + 2\widehat{f}(h)\widehat{f}(2h)\widehat{f}(3h)\widehat{f}(-6h).$$
Now, using the Cauchy--Schwarz Inequality, we obtain 
\begin{align*}
\sum_h \widehat{f}(h)\widehat{f}(2h)\overline{\overline{\widehat{f}(3h)\widehat{f}(-6h)}} 
    &\le \left(\sum_h |\widehat{f}(h)|^2|\widehat{f}(2h)|^2\right)^{1/2} \left(\sum_h |\widehat{f}(3h)|^2|\widehat{f}(-6h)|^2\right)^{1/2} \\
    &= \sum_h |\widehat{f}(h)|^2|\widehat{f}(2h)|^2,
\end{align*}
where the last equality follows using a substitution $h' = 3h$ and the fact that $|\widehat f (6h)| = |\widehat f (-6h)|$.
Thus, by \thref{cor:sidfun}(i), the system is common. 

We also found that over $\F_5$, the set $A\se \F_5^2$ consisting of the elements 
$$\dbinom{0}{0},\dbinom{0}{3}, \binom{1}{2},\dbinom{3}{0}, \dbinom{3}{3}, \dbinom{4}{0}, \dbinom{4}{1}, \dbinom{4}{2} $$
has fewer than $8^5/5^4$ solutions to $L=0$ and thus, $L$ is not  Sidorenko over $\F_5$. 
\end{example}	      

Our second example illustrates the power of containing an additive quadruple (denoted AQ), which can be used to `dominate' other critical equations, at least for showing that a system is common. Note that we explicitly exclude AQs in \thref{p:sid-one}. We do not know whether this is necessary for \thref{p:sid-one} to be true. 

\begin{example}\thlabel{ex:cont-AQ}
Consider the system generated by $x_1 - x_2 + x_3 - x_4$ and $(x_1 - x_3) + 2(x_2 - x_5).$ The linear forms $L_1,\ldots L_5$ have coefficient matrix  
$$\begin{pmatrix}
1 & -1 & 1 & -1 & 0\\
1 & 2 & -1 & 0 & -2\\
0 & 3 & -2 & 1 & -2\\
3 & 0 & 1 & -2 & -2\\
2 & 1 & 0 & -1 & -2.
\end{pmatrix}.$$
Again, exactly three out of five of the shortest induced equations are Sidorenko. 

We claim that $L$ is common over $\F_q$ for all prime powers $q$ except powers of $2$ and $3$. The proof of this claim is significantly different from the proof in the previous example. Considering the shortest equations we obtain for any $f:\Fqn\to[0,1]$ that 
\begin{align*}
 \sum_{i=1}^5\tau_{L_i}(f) 
 &=\sum_{h \in \widehat{\Fqn} \setminus \{0\}}
   |\widehat{f}(h)|^4 
    + 2 |\widehat{f}(h)|^2|\widehat{f}(2h)|^2 +  2\widehat{f}(-h)\widehat{f}(2h)^2\widehat{f}(-3h)\\
 &= \sum_{h \in \widehat{\Fqn} \setminus \{0\}} 
    \frac 12 |\widehat f(2h)|^4  + \frac 12 |\widehat f(3h)|^4  
    + 2 |\widehat f(h)|^2 |\widehat f(-2h)|^2 
    + 2\, \mathrm{Re} \big( \widehat f(h) \widehat f(3h) \widehat f (-2h)^2 \big),
\end{align*}
by suitable index shifts in some summands, $\widehat{f}(-2h) =\overline{\widehat{f}(2h)},$ and since $\tau_{L_i}(f)$ is real for a real-valued function $f$ by~\eqref{eq:twisted} and definition of $\Lambda_{L_i}(f).$ 
Now, 
\begin{align*}
&\frac 12 |\widehat f(2h)|^4  + \frac 12 |\widehat f(3h)|^4  
    + |\widehat f(h)|^2 |\widehat f(-2h)|^2 
    + 2\, \mathrm{Re} \big( \widehat f(h) \widehat f(3h) \widehat f (-2h)^2 \big)\\
&\qquad \geq |\widehat f (-2h) \widehat f(3h)|^2 + |\widehat f(h)|^2 |\widehat f(-2h)|^2 + 2\, \mathrm{Re} \big( \widehat f(h) \widehat f(3h) \widehat f (-2h)^2 \big)\\
&\qquad = \left(\widehat f (h) \widehat f(-2h) + \overline{\widehat f(-2h) \widehat f (3h)} \right) \left( \overline {\widehat f (h) \widehat f(-2h)} + \widehat f(-2h) \widehat f (3h) \right) \geq 0
\end{align*}
for all $h$. Thus, the system is common by \thref{cor:sidfun}(i). \end{example}

\section{Discussion and further research directions}\label{sec:conclusion}

In this paper, we prove that a system $L$ is not Sidorenko whenever $s(L)$ is odd. We also make significant progress towards understanding the case when $s(L)$ is even, though much remains to discover. In the case when $L$ is a single equation with an even number of non-zero coefficients, the work in~\cite{fpz19} implies that $L$ is Sidorenko if and only if it is common. However, for systems of multiple equations, such a correspondence cannot be expected as \thref{ex:non-AQ} demonstrates that there are systems with $s(L)$ even that are common but not Sidorenko. The counter-example that shows it is not Sidorenko was found computationally and currently we do not have a good approach that can be used to show that similar systems are not Sidorenko. 

In Section~\ref{sec:2xk}, we make progress towards a full characterisation of Sidorenko $(2 \times k)$-systems (where the first interesting case is $s(L)=4$). In light of our work there, we make the following conjecture.

\begin{conj}\thlabel{c:25}
    All $(2 \times 5)$-systems with $s(L) =4$ are not Sidorenko.
\end{conj}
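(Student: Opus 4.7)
The plan is to classify all $(2\times 5)$-systems $L$ with $s(L)=4$ according to the number $s\in\{0,1,\ldots,5\}$ of the five length-$4$ induced equations $L_1,\ldots,L_5$ that are Sidorenko as single equations. The case $s=0$ is immediate: every $L_i$ is uncommon as a single $4$-variable equation (by the Fox--Pham--Zhao characterisation), so \thref{t:uncommon} yields that $L$ is uncommon and hence not Sidorenko. The case $s=1$ is handled by \thref{p:sid-one} whenever some non-Sidorenko $L_i$ is not $b$-coincidental (with $b$ determined by the unique Sidorenko $L_1$). The remaining subcase---all four of $L_2,L_3,L_4,L_5$ simultaneously $b$-coincidental---would first be attacked by showing it is combinatorially impossible, using that all $L_i$ arise from the same two-dimensional row span $V\subseteq\F_q^5$ of $L$ and that $b$-coincidentality is a rather rigid multiplicative constraint; failing that, I would modify the random-sign Fourier construction underlying the proof of \thref{p:sid-one} (changing which frequencies receive $\pm\eps$) so that the dominant term from $L_1$ is no longer cancelled.

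The cases $s\geq 2$ are more delicate. Here I would first carry out a linear-algebraic analysis of the row span $V\subseteq\F_q^5$: the five special lines $\langle L_i\rangle$ are cut out by the coordinate hyperplanes, and $L_i$ being Sidorenko means its four non-zero entries admit a zero-sum pairing. Small case studies strongly suggest that once two $L_i$ are Sidorenko, the pairings are so rigidly coupled inside the two-dimensional space $V$ that a third coordinate of some $L_j$ is forced to vanish, yielding $s(L)\le 3$ and a contradiction. Making this precise should rule out $s\geq 4$ and, optimistically, restrict attention to $s\in\{2,3\}$; the hope is that a finite enumeration of the allowed combinatorial patterns of pairings across $L_1,\ldots,L_5$ reduces the problem to a bounded number of "model" coefficient matrices.

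For the remaining configurations with $s\in\{2,3\}$, \thref{ex:non-AQ} already exhibits a system that is common yet not Sidorenko, so witnesses produced by \thref{cor:sidfun}(ii) (which require $\E f=1/2$) are insufficient. Instead, the plan is to exploit the inclusion--exclusion expansion of \eqref{eq:sums}: for $s(L)=4$ the $(s(L)-1)$-order coefficient no longer need be negative as it was in the proof of \thref{thm:main1-odd}, but on the unbalanced set $A=\Fqn\sm B$ (or its complement) for a carefully chosen small $B$, the sub-leading good-set contributions can be quantified in terms of the coefficients of $L$ and, case-by-case in the finite list from the previous step, shown to yield $\Lambda_L(A)<(|A|/q^n)^5$. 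In particular, the computational counterexample in \thref{ex:non-AQ} should serve as the template for an asymmetric density witness, with the choice of $B$ dictated by the Sidorenko/non-Sidorenko pattern among the $L_i$.

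The main obstacle is uniformly handling $s\in\{2,3\}$: commonness does not imply Sidorenkoness here, so symmetric Fourier constructions à la \thref{p:sid-one} and Fox--Pham--Zhao cannot suffice, and one must design genuinely asymmetric witnesses. Whether such witnesses can be produced from the two-dimensional row span $V$ in a uniform way---rather than by ad hoc computer search as in \thref{ex:non-AQ}---is the crux of the conjecture, and is where I expect the bulk of the work (and perhaps a genuinely new idea exploiting the interplay between the Sidorenko pairings inside $V$ and the multiplicative structure of the coefficients) to be required.
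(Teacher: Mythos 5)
The statement you are addressing is stated in the paper as an open \emph{conjecture} (\thref{c:25}); the paper offers no proof of it. The authors record precisely which cases are known: the case where all five induced length-$4$ equations are non-Sidorenko follows from \thref{t:uncommon}, the case of exactly one Sidorenko equation is handled by \thref{p:sid-one} only under the additional hypothesis that some $L_i$ is not $b$-coincidental, and \thref{ex:non-AQ} is a single further instance verified by computer search; ``all other cases remain open.'' Your proposal correctly reassembles these partial results, but everything beyond them is a research programme rather than an argument, and the unproven steps are exactly where the difficulty of the conjecture lies.

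Concretely: (a) in the one-Sidorenko-equation case you do not resolve the subcase where $L_2,\dots,L_5$ are all $b$-coincidental --- you offer two alternative strategies (prove it cannot occur, or redesign the random Fourier weights) without carrying out either, and the paper only remarks that constructing such a configuration ``seems difficult,'' which is not a proof of impossibility; (b) your assertion that two or more Sidorenko equations among the $L_i$ force such rigidity that four or five of them is impossible rests on ``small case studies strongly suggest,'' with no lemma supplied --- note that both \thref{ex:non-AQ} and \thref{ex:cont-AQ} realise exactly three, so any such rigidity statement needs a genuine argument; (c) for the remaining configurations you correctly observe that \thref{cor:sidfun}(ii) is useless (it certifies uncommonness, whereas \thref{ex:non-AQ} is common yet non-Sidorenko, so a witness of density $1/2$ cannot exist), but the proposed substitute --- an asymmetric witness extracted from the expansion~\eqref{eq:sums} --- is not developed, and the sign analysis of $\Sigma(B)$ in the proof of \thref{thm:main1-odd} genuinely breaks when $s(L)$ is even, so that mechanism does not transfer. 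The paper itself states it has no method beyond computer search for showing systems like \thref{ex:non-AQ} are not Sidorenko. In short, the proposal contains no proof of \thref{c:25}; its critical steps are open problems.
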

This is true when all the shortest equations $L_i$ are not Sidorenko, by \thref{t:uncommon}. \thref{p:sid-one} extends this to the case when only one of the shortest equations is Sidorenko (under some additional constraints). Apart from those cases and  \thref{ex:non-AQ}, all other cases remain open.

In \thref{p:sid-one}, we exclude the case when the Sidorenko equation is an additive quadruple, i.e.~ $x+y=z+w.$ The presence of an additive quadruple among the shortest equations seems to be a bottleneck towards proving \thref{c:25}. \thref{ex:cont-AQ} contains an additive quadruple, so proving that this system is not Sidorenko would be a first step towards \thref{c:25}. In this example, we crucially used the additive quadruple to `dominate' the other shortest equations to show commonness. We wonder whether this is an exception, and thus \thref{ex:cont-AQ} is a special case.
\begin{ques}
    For $k\ge 5,$ does there exist an uncommon $(2\times k)$-system containing an additive quadruple?
\end{ques}
We remark that for $k = 4,$ this is indeed the case since we proved in~\cite{klm21-uncommon} that any irredundant $(2\times 4)$-system is uncommon. However, in such systems, $s(L)\le 3$ and so the additive quadruple is not really a `building block' in this case.

If \thref{c:25} is true, then we believe it is possible that all irredundant $(2 \times k)$-systems with $s(L)= k-1$ are non-Sidorenko. 
\begin{ques}\thlabel{qu:2k}
    Does there exist a $(2 \times k)$-system with $s(L)=k-1$ that is Sidorenko, when $k \ge 7$ is odd?
\end{ques}

Let us give yet another motivation behind \thref{c:25,qu:2k}. In Section~\ref{sec:entropy}, we show that the union of certain Sidorenko systems with particular shared variables is also Sidorenko. There, we combined several Sidorenko equations on `blocks'. 
When considering the union of Sidorenko systems with some shared variables, it seems that the more the variables `intermingle', the harder it is for the combined system to be Sidorenko. For example, the system $y-x = z-w$ and $3y+z= 3w+x$ is a system of two Sidorenko equations with shared variables on both sides. This is a four term arithmetic progression, which is an irredundant $(2\times 4)$-system and hence uncommon (see~\cite{klm21-uncommon}). \thref{ex:non-AQ} also provides a non-Sidorenko 5-variable system where the coefficients of two Sidorenko equations intermingle. 

We wonder whether Sidorenko equations can be combined in arbitrary ways (obeying the $s(L)$ being even condition) and yield Sidorenko systems, but believe that some `block-type' structure of the coefficients, along the lines of \thref{thm:main2-entropy} is necessary. 
The confirmation of \thref{c:25} would support the latter, as all reasonable two-equation systems admitting a graph template (in this case an $S_2$-template) have at least six variables. 

Another piece of evidence supporting the hypothesis that all Sidorenko systems are structured in a `block-like' fashion, is that all the systems we know to be Sidorenko are formed by combining sets of Sidorenko systems. We do not think it is possible to create a Sidorenko system in another way. 
\begin{ques}
    Does there exist a Sidorenko system whose matrix is not equivalent to a matrix where every row is a Sidorenko equation?
\end{ques}

Given the large family of \emph{graphs} that are proven to be Sidorenko via use of the entropy method, one may wonder whether \thref{thm:main2-entropy} could be strengthened to include systems whose graph templates contain cycles. Many such systems with \emph{repeated} linear forms on distinct tuples are equivalent to systems that are proven to be Sidorenko by \thref{thm:main2-entropy}. For example, the system $L_1(\xv^{(1)})=L_1(\xv^{(2)}),$ $L_1(\xv^{(1)})=L_1(\xv^{(3)}),$  $L_2(\xv^{(2)})=L_2(\xv^{(4)}),$ $L_2(\xv^{(3)})=L_2(\xv^{(4)})$ has a graph template that is a $C_4$, and the symmetry of repeated forms makes it amenable to the entropy method. However, this system is also equivalent to one with a graph template that is a star with one double edge, so is covered by \thref{thm:main2-entropy}. With this in mind, we wonder about the following (which would also form another piece of the puzzle towards determining the structure of Sidorenko systems).

\begin{ques}
    Does there exist a Sidorenko system $L$ admitting a graph template which is a $C_4$, where $L$ is not equivalent to any system admitting a graph template that is a tree?
\end{ques}

Finally, we want to comment that both \thref{ex:non-AQ,ex:cont-AQ} are common, but the proofs use very different underlying properties of the systems. The argument for \thref{ex:non-AQ} relies on the \emph{multiplicative} structure of the coefficients. 
The multiplicative structure of the coefficients also plays a crucial role in \thref{p:sid-one}. As noted in the introduction, this contrasts the situation for single-equation systems where the characterisation of Sidorenko and common equations merely depends on the additive structure of the coefficients. The additive condition of being translation invariant is a necessary condition for a system to be Sidorenko. For commonness, translation-invariance is not necessary since equations with an odd number of variables are common. However, we do not know whether it is necessary for a system of two or more equations to be common.
    \begin{ques}
        Does there exist a system of rank at least two which is common, but not translation-invariant?
    \end{ques}

\bigskip

\noindent    
The first and third authors would like to thank UNSW for their hospitality and for providing a stimulating research environment, which is where this research began.  

   \bibliographystyle{abbrv} 

\end{document}